\documentclass{amsart}
\usepackage[utf8]{inputenc}
\usepackage[T1]{fontenc}
\usepackage{amssymb,amsfonts,latexsym,amsmath,amscd,amsthm,mathtools}
\usepackage{tikz-cd,ytableau} 
\usepackage{epsfig,epic,eepic} 
\usepackage{pdfpages}
\usepackage{enumerate}
\usepackage{caption,subcaption}
\usepackage{cite}
\usepackage{hyperref}
\usepackage{listings,comment}

\usepackage[marginratio=1:1, textheight=608pt, textwidth=444pt]{geometry}

\newtheorem{definition}{Definition}[section]
\newtheorem{cor}[definition]{Corollary}
\newtheorem{lem}[definition]{Lemma}

\newtheorem{thm}[definition]{Theorem}

\newtheorem{remark}{Remark}[definition]

\numberwithin{equation}{section}

\newcommand{\ZZ}{\mathbb{Z}}
\newcommand{\NN}{\mathbb{N}}
\newcommand{\QQ}{\mathbb{Q}}

\newcommand{\RR}{\mathbb{R}}
\newcommand{\CC}{\mathbb{C}}
\newcommand{\abs}[1]{\left\lvert#1\right\rvert}

\newcommand{\floor}[1]{\left\lfloor#1\right\rfloor}
\newcommand{\ceil}[1]{\left\lceil#1\right\rceil}
\DeclareMathOperator{\ord}{ord}
\DeclareMathOperator{\Sym}{Sym}
\DeclareMathOperator{\std}{std}
\DeclareMathOperator{\Ad}{Ad}
\DeclareMathOperator{\GL}{GL}

\DeclareMathOperator{\Gal}{Gal}

\DeclareMathOperator{\He}{He}

\title[Decomposition of symmetric powers for $\mathrm{GL}(n)$]{Conjectural decomposition of symmetric powers of automorphic representations for $\mathrm{GL}(n)$}

\author{Kin Ming Tsang}
\address{Department of Mathematics, University of British Columbia}
\email{kmtsang@math.ubc.ca}

\subjclass[2020]{11F70}
\keywords{automorphic representation, symmetric powers}

\begin{document}

\begin{abstract}
Let $\pi$ be a cuspidal automorphic representation for $\mathrm{GL}(n)$ over a number field. We establish a conditional upper bound on the number of cuspidal isobaric summands in the symmetric $k$-th power lift of $\pi$, assuming that the symmetric $m$-th power lift of $\pi$ is automorphic and cuspidal for all $m \leq k-1$, along with other specified Langlands functoriality conjectures. For sufficiently large $k$, the resulting bound is independent of the specific value of $k$.
We further extend our study to cases in which the cuspidality assumptions on the symmetric power lifts are relaxed.
\end{abstract}

\maketitle

\section{Introduction}



Let $\pi$ be a cuspidal automorphic representation for $\GL(n)$ over a number field $F$. We associate to $\pi$ the following incomplete $L$-function
\begin{align*}
    L^{S}(s, \pi) = \prod_{v \notin S} \det \left( I_{n} - A_{v}(\pi)(\mathrm{N}v)^{-s} \right)^{-1}
\end{align*}
for $\Re(s) > 1$, where $S$ is the set containing all archimedean places and the places at which $\pi$ is ramified, $\mathrm{N}v$ is the norm of $v$, and $A_{v}(\pi)$ is the Langlands conjugacy class of $\pi$ at $v$, which can be represented by a diagonal matrix with entries $\alpha_{1}(v, \pi), \dots, \alpha_{n}(v, \pi) \in \CC$.

For $k \geq 1$, we consider the usual symmetric $k$-th power representation
\begin{align*}
    \Sym^{k} : \GL_{n}(\CC) \to \GL_{m}(\CC) ,
\end{align*}
where $m = \binom{n+k-1}{k}$.
We then consider the incomplete symmetric $k$-th power $L$-function associated to $\pi$, given by 
\begin{align*}
    L^{S}(s, \pi, \Sym^{k}) = \prod_{v \notin S} \det \left( I_{m} - \Sym^{k}(A_{v}(\pi))(\mathrm{N}v)^{-s} \right)^{-1}
\end{align*}
for $s$ in a suitable right half-plane.
The principle of functoriality~\cite{Langlands_functoriality_1979} predicts that there exists an automorphic representation $\Pi$ for $\GL(m)$ such that 
\begin{align*}
    L(s, \pi, \Sym^{k}) = L(s, \Pi) .
\end{align*}
In this case, we say that the symmetric $k$-th power of $\pi$ is automorphic and denote $\Pi$ by $\Sym^{k}(\pi)$.

Let $\mathcal{N}(\Pi)$ denote the number of cuspidal isobaric summands of an automorphic representation $\Pi$. We are interested in bounding $\mathcal{N}(\Sym^{k}(\pi))$.
For $n=2$, the cases $k=2,3$, and $4$ are well known. In particular, we have the sharp bounds (by the work of Ramakrishnan~\cite{Ramakrishnan_modularity_of_Rankin--Selberg_2000} and Kim-Shahidi~\cite{Kim_Shahidi_cuspidality_2002, Kim_Shahidi_sym3_2002})
\begin{align*}
    \mathcal{N}(\Sym^{2}(\pi)) &\leq 3, &
    \mathcal{N}(\Sym^{3}(\pi)) &\leq 3, \text{ and} & 
    \mathcal{N}(\Sym^{4}(\pi)) &\leq 5.
\end{align*}
It is worth noting that the bounds $\mathcal{N}(\Sym^{2}(\pi)) \leq 3$ and $\mathcal{N}(\Sym^{4}(\pi)) \leq 5$ are trivial. 

For higher symmetric power lifts, automorphy is not known. Assuming that the symmetric fifth power lift is automorphic, Ramakrishnan~\cite{Ramakrishnan_sym_power_cusp_form_2009} proved that it is cuspidal, provided that the symmetric square, cube, and fourth power lifts are cuspidal. 
Furthermore, Ramakrishnan showed that if $\Sym^{6}(\pi)$ is cuspidal, then $\Sym^{k}(\pi)$ is cuspidal for all $k \geq 6$ under the assumption that $\Sym^{j}(\pi)$ is automorphic for all $j \leq 2k$.

For the case $n=3$, Walji~\cite{Walji_conjectural_decomposition_2022} proved that if the symmetric $m$-th power lifts are cuspidal for $k-3 \leq m \leq k-1$, then $\mathcal{N}(\Sym^{k}(\pi)) \leq 3$ for $k \geq 7$, under the assumption that the symmetric $j$-th power lifts are automorphic for all $k-3 \leq j \leq k$ and that certain Rankin--Selberg products are automorphic.
For $n=4$, Walji also showed that if the symmetric $m$-th power lifts are cuspidal for $k-3 \leq m \leq k-1$, then $\mathcal{N}(\Sym^{k}(\pi)) \leq 4$ for $k \geq 39$ under the assumption that the symmetric $j$-th power lifts are automorphic for all $k-4 \leq j \leq k$ and that certain Rankin--Selberg products are automorphic.

In this article, we generalize the ideas of Ramakrishnan~\cite{Ramakrishnan_sym_power_cusp_form_2009} and Walji~\cite{Walji_conjectural_decomposition_2022} to the case $n \geq 5$. More precisely, we bound the number of isobaric summands in the symmetric $k$-th power lift of $\pi$, where $\pi$ is a cuspidal automorphic representation for $\GL(n)$, for $n \geq 5$.
To state our theorems concisely, we define
\begin{align}\label{eqn:delta_nk_gamma_def}
    \delta_{n,k}(\gamma) = \ceil{ \frac{1}{n^{\gamma-1}} \min_{0 \leq i \leq \floor{\frac{n-1}{2}}} \ceil{\binom{n+k-\gamma-1-2i}{n-1} \bigg/ \binom{n}{2i+1}}} .
\end{align}

\begin{thm}\label{thm:Main_Theorem_intro}
Let $\pi$ be a cuspidal automorphic representation for $\GL(n)$. Fix $k \geq n+1$. Assume that $\Sym^{m} (\pi)$ is automorphic for $k-n \leq m \leq k$, and cuspidal for $m = k-2i-1$ for all $0 \leq i \leq \floor{\frac{n-1}{2}}$. Further assume that the exterior power lift $\Lambda^{j}(\pi)$ is automorphic for all $2 \leq j \leq n-1$, and that the Rankin--Selberg products of its dual with every summand of $\Sym^{k}(\pi)$ are automorphic. Then,
\begin{equation*}
    \mathcal{N}(\Sym^{k}(\pi)) \leq \floor{ \binom{n+k-1}{n-1} \bigg/ \delta_{n,k}(1) } .
\end{equation*}
\end{thm}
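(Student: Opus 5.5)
The plan is to show that every cuspidal summand $\eta$ of $\Sym^{k}(\pi)$ has degree $d(\eta)\geq \delta_{n,k}(1)$. The theorem then follows by counting degrees. Indeed, $\Sym^{k}(\pi)$ is an isobaric sum of cuspidal representations whose degrees add up to $\dim \Sym^{k}(\CC^{n}) = \binom{n+k-1}{n-1}$, so $\mathcal{N}(\Sym^{k}(\pi))\cdot \delta_{n,k}(1) \leq \binom{n+k-1}{n-1}$. For $\gamma=1$ the factor $n^{\gamma-1}$ is $1$, so it suffices to prove $d(\eta)\binom{n}{2i+1} \geq \binom{n+k-2-2i}{n-1}$ for one (in fact every) $0\leq i\leq \floor{\frac{n-1}{2}}$. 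This is stronger than the minimum appearing in $\delta_{n,k}(1)$.

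\emph{Representation-theoretic input.} For $V=\CC^{n}$ I use the Pieri rule
\[
\Sym^{a}V\otimes \Lambda^{b}V \cong S_{(a+1,1^{b-1})}V \oplus S_{(a,1^{b})}V ,
\]
applied with $a=k-2i-1$ and $b=2i+1$. The first summand has shape $(k-2i,1^{2i})$, so it is not $\Sym^{k}V$ unless $i=0$. To get $\Sym^{k}V$ as a genuine constituent for every $i$, I instead use the exact Koszul complex
\[
0\to \Lambda^{n}V\otimes\Sym^{k-n}V\to\cdots\to V\otimes\Sym^{k-1}V\to \Sym^{k}V\to 0 .
\]
Its exactness gives a virtual identity expressing $\Sym^{k}$ through $\Sym^{k-j}\otimes\Lambda^{j}$, with the odd-$j$ terms on one side and the even-$j$ terms together with $\Sym^{k}$ on the other. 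The hypotheses (automorphy of $\Sym^{m}(\pi)$ for $k-n\leq m\leq k$, and of $\Lambda^{j}(\pi)$ for $2\le j\le n-1$) make every term of this identity automorphic. The cuspidal ones we need are $\Sym^{k-2i-1}(\pi)$, which are assumed cuspidal. At the level of Satake parameters this yields, at almost all $v$,
\[
\Sym^{k}(A_v)\oplus \bigoplus_{j\ \mathrm{even}\geq 2}\Lambda^{j}(A_v)\otimes\Sym^{k-j}(A_v) \cong \bigoplus_{j\ \mathrm{odd}}\Lambda^{j}(A_v)\otimes\Sym^{k-j}(A_v).
\]

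\emph{The pole argument.} Fix a cuspidal summand $\eta$ of $\Sym^{k}(\pi)$. Twist the identity by $\eta^{\vee}$ and take incomplete $L$-functions. The left side contains $L^{S}(s,\Sym^{k}(\pi)\times\eta^{\vee})$, which has a pole at $s=1$. The remaining left-side factors are Rankin--Selberg $L$-functions of automorphic representations, so they do not vanish at $s=1$ by Jacquet--Shalika (after using the assumed automorphy of the relevant products). Hence some odd-$j$ factor on the right, with $j=2i+1$, has a pole:
\[
L^{S}\bigl(s,\Sym^{k-2i-1}(\pi)\times(\eta\boxtimes\Lambda^{2i+1}(\pi)^{\vee})\bigr).
\]
By hypothesis $\eta\boxtimes\Lambda^{2i+1}(\pi)^{\vee}$ is automorphic; write it as an isobaric sum $\boxplus\sigma_{l}$. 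A pole of $L(s,\Sym^{k-2i-1}(\pi)\times\sigma_l)$ forces $\sigma_l\cong\Sym^{k-2i-1}(\pi)^{\vee}$, using the cuspidality of $\Sym^{k-2i-1}(\pi)$. Comparing degrees gives
\[
d(\eta)\binom{n}{2i+1}\ \geq\ \dim\Sym^{k-2i-1}(\CC^{n})=\binom{n+k-2i-2}{n-1}.
\]
Since $i$ is only known to exist, I take the minimum over $i$. This matches exactly the minimum in \eqref{eqn:delta_nk_gamma_def}, and integrality of $d(\eta)$ supplies the ceilings.

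\emph{Main obstacle.} The delicate step is justifying the nonvanishing of the left-side $L$-factors at $s=1$, so that a pole genuinely transfers to the right. Several even-$j$ products $\Lambda^{j}(\pi)\otimes\Sym^{k-j}(\pi)\otimes\eta^{\vee}$ are not literally assumed automorphic. I would first try to avoid them via Landau-type positivity. Applying the Satake identity to $\Pi\times\Pi^{\vee}$-type combinations makes the logarithmic derivatives have nonnegative Dirichlet coefficients. This lets me bound the pole order of the right side from below by that of $L^{S}(s,\Sym^{k}(\pi)\times\eta^{\vee})$, needing only the automorphy of $\eta\boxtimes\Lambda^{j}(\pi)^{\vee}$ together with the cuspidal $\Sym^{k-2i-1}(\pi)$. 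If that fails, I would restrict to $i=0$, where $\Sym^{k}V\subset V\otimes\Sym^{k-1}V$ is a genuine constituent and no cancellation is needed. That case alone already gives a bound at least as strong as the minimum in $\delta_{n,k}(1)$.
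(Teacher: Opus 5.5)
Your main line is the paper's proof. The Koszul alternating identity is exactly the Schur identity of Lemma~\ref{lem:Schur_poly_fund_identity} (used via Lemma~\ref{lem:Main_Lemma}), and the remaining steps match it one for one: the pole transfer from $L^{T}(s,\Sym^{k}(\pi)\times\eta^{\vee})$ to an odd-$j$ factor, the use of the cuspidality of $\Sym^{k-2i-1}(\pi)$, and the minimum over $i$.

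Your ``main obstacle'' is illusory. The hypothesis, after dualizing, makes $\Lambda^{j}(\pi)\boxtimes\eta^{\vee}$ automorphic, so each even-$j$ factor is a Rankin--Selberg $L$-function of two isobaric representations and cannot vanish at $s=1$; that is exactly how the paper argues. By contrast, your $i=0$ fallback would fail. We have $V\otimes\Sym^{k-1}V=\Sym^{k}V\oplus S_{(k-1,1)}V$, and $L^{T}(s,S_{(k-1,1)}(\pi)\times\eta^{\vee})$ is not known to be automorphic. It equals the alternating product of the $j\geq 2$ Koszul terms, so it can vanish at $s=1$ whenever an odd $j\geq 3$ factor has a pole. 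That cancellation is precisely what forces the minimum over $i$.
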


We can relax the cuspidality assumptions in the theorem while still obtaining nontrivial bounds. Specifically, we no longer assume that $\Sym^{m} (\pi)$ is cuspidal for $k-\gamma < m \leq k-1$, where $\gamma > 1$. We obtain the following bound for the number of isobaric summands in the symmetric $k$-th power lift of $\pi$, which depends on $\gamma$.

\begin{thm}\label{thm:Main_Theorem_var1_intro}
Let $\pi$ be a cuspidal automorphic representation for $\GL(n)$. Fix $k \geq n+1$ and let $\gamma \leq k-n$. Assume that $\Sym^{m} (\pi)$ is automorphic for $k-n-\gamma+1 \leq m \leq k$, cuspidal for $k-n-\gamma+ 1 \leq m \leq k-\gamma$. Further assume that the exterior power lift $\Lambda^{j}(\pi)$ is automorphic for all $2 \leq j \leq n-1$, and that the Rankin--Selberg products of its dual with every summand of $\Sym^{\ell}(\pi)$ are automorphic for all $k-\gamma + 1 \leq \ell \leq k$. Then,
\begin{equation*}
    \mathcal{N}(\Sym^{k}(\pi)) \leq \floor{ \binom{n+k-1}{n-1} \bigg/ \delta_{n,k}(\gamma) } .
\end{equation*}
\end{thm}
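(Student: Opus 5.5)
Write $\Sym^{k}(\pi)=\boxplus_{j=1}^{N}\eta_j$ with each $\eta_j$ cuspidal, of degree $d_j$, so that $N=\mathcal{N}(\Sym^{k}(\pi))$. We will show that $d_j\ge \delta_{n,k}(\gamma)$ for every $j$. Since $\sum_j d_j=\binom{n+k-1}{n-1}$, the bound on $N$ then follows immediately.

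\textbf{Step 1: a Pieri-type decomposition.} The starting point is the identity of $\GL_n(\CC)$-representations
\begin{align*}
    \Sym^{\ell}\otimes \Lambda^{n-1} \;\cong\; \Sym^{\ell-1}\otimes \det \;\oplus\; \mathbb{S}_{(\ell,1^{n-1})},
\end{align*}
together with its iterates. Combining $\Lambda^{j}\otimes\Sym^{\ell}$ with $\Lambda^{n-j}\cong(\Lambda^{j})^\vee\otimes\det$, one finds that $\Sym^{\ell}\otimes\Lambda^{n-1}$, and more generally $\Sym^{\ell}\otimes(\Lambda^{2i+1})^\vee$, contains $\Sym^{\ell-2i-1}$ up to a power of $\det$. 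Passing to automorphic representations, which is possible by the assumed automorphy of $\Lambda^{j}(\pi)$ and of the Rankin--Selberg products $\eta_j\boxtimes\Lambda^{2i+1}(\pi)^\vee$, gives an isobaric identity. Its $L$-function is $L^{S}(s,\Sym^{\ell}(\pi)\times\Lambda^{2i+1}(\pi)^\vee)$, and it contains $\Sym^{\ell-2i-1}(\pi)\otimes\omega$ as a summand.

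\textbf{Step 2 (case $\gamma=1$).} Fix $i$ and $\ell=k$. By hypothesis $\Sym^{k-2i-1}(\pi)$ is cuspidal, so it is a single cuspidal summand of the right-hand side. The left-hand side is $\boxplus_j(\eta_j\boxtimes\Lambda^{2i+1}(\pi)^\vee)$. By unique isobaric decomposition (Jacquet--Shalika), $\Sym^{k-2i-1}(\pi)$ must occur inside one particular $\eta_j\boxtimes\Lambda^{2i+1}(\pi)^\vee$, so that
\begin{align*}
    d_j\binom{n}{2i+1}\ge\binom{n+k-2i-2}{n-1}.
\end{align*}
This only controls one $j$. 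To get a bound for every $j$, apply the same argument after twisting the roles: for each $\eta_j$, use that $\eta_j\boxtimes\Lambda^{2i+1}(\pi)^\vee$ shares a summand with $\Sym^{k}(\pi)\boxtimes\Lambda^{2i+1}(\pi)^\vee$. The hard part is ensuring that this shared summand lies inside the big cuspidal piece $\Sym^{k-2i-1}(\pi)$ rather than in the complementary $\mathbb{S}$-part.

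My plan for this is an $L$-function pole argument. Consider
\begin{align*}
    L^{S}\bigl(s,\eta_j\times\Lambda^{2i+1}(\pi)^\vee\times\Sym^{k-2i-1}(\pi)^\vee\bigr)
\end{align*}
and show that it has a pole at $s=1$. The reason is that $\eta_j^\vee\subset\Sym^{k}(\pi)^\vee$, and $\Sym^{k}\otimes\Sym^{k-2i-1,\vee}\otimes\Lambda^{2i+1,\vee}$ contains the trivial representation, while positivity of the Dirichlet coefficients controls the contributions from the other summands.

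Taking the minimum over $i$ yields $d_j\ge\delta_{n,k}(1)$. Here the minimum is the right quantity: for each $j$ the available inequality may come from some $i$ depending on $j$, and the bound must hold whichever $i$ it is.

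\textbf{Step 3 (general $\gamma$).} Now only $\Sym^{m}(\pi)$ with $m\le k-\gamma$ is cuspidal. Iterate the tensoring with $\pi^\vee$ (equivalently with $\Lambda^{n-1}(\pi)$, up to $\det$) $\gamma-1$ times, descending from $\Sym^{k}$ through $\Sym^{k-1},\dots,\Sym^{k-\gamma+1}$. This uses the assumed automorphy of the Rankin--Selberg products at every level $\ell$ between $k-\gamma+1$ and $k$. At each step the degree of a summand can shrink by a factor of at most $n$. Any cuspidal summand of $\Sym^{k-\gamma+1}(\pi)$ therefore arises from some $\eta_j$ twisted $\gamma-1$ times, which gives $d_j\,n^{\gamma-1}\ge$ (the minimal degree at level $k-\gamma+1$). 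Applying Step 2 at level $k-\gamma+1$, whose cuspidal lower powers $k-\gamma-2i$ are covered by the hypotheses, gives $d_j\ge\delta_{n,k}(\gamma)$, with the ceilings exactly as in \eqref{eqn:delta_nk_gamma_def}.

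\textbf{Main obstacle.} The main obstacle is the step in Step 2 that forces every summand $\eta_j$, and not just one, to meet the cuspidal piece $\Sym^{k-2i-1}(\pi)$. Unique decomposition alone does not give this, and making the pole argument rigorous is where I expect the real work to lie.
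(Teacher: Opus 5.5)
Your overall frame (show that every cuspidal summand of $\Sym^{k}(\pi)$ has degree at least $\delta_{n,k}(\gamma)$, then sum degrees) is the paper's. However, the mechanism in Steps 1--2 fails exactly at the point you flag, and Step 1 is false for $i\ge1$. By Pieri, $\Sym^{\ell-2i-1}\otimes\Lambda^{2i+1}=\mathbb{S}_{(\ell-2i,1^{2i})}\oplus\mathbb{S}_{(\ell-2i-1,1^{2i+1})}$, and this contains $\Sym^{\ell}$ only when $i=0$. Hence $\Sym^{\ell}\otimes(\Lambda^{2i+1})^{\vee}$ contains no twist of $\Sym^{\ell-2i-1}$ for $i\ge1$; for instance, for $n=3$ and $i=1$ it is the irreducible $\Sym^{\ell}\otimes\det^{-1}$. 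Likewise $\Sym^{k}\otimes(\Sym^{k-2i-1})^{\vee}\otimes(\Lambda^{2i+1})^{\vee}$ has no invariants for $i\ge1$, so the pole you propose does not exist. For $i=0$, positivity only produces a pole for the product over all $\eta_j$, not for a prescribed one. The true fact $\Sym^{k}\subset\Sym^{k-1}\otimes\std$ would reach every $\eta_j$, but using it needs $L^{T}(s,\mathbb{S}_{(k-1,1)}(\pi)\times\widetilde{\eta_j})\neq0$ at $s=1$. The hypotheses, which involve only $\Sym$ and $\Lambda$ lifts, do not give this. (A smaller slip: the complement of $\Sym^{\ell-1}\otimes\det$ in $\Sym^{\ell}\otimes\Lambda^{n-1}$ is $\mathbb{S}_{(\ell+1,1^{n-2})}$; the representation $\mathbb{S}_{(\ell,1^{n-1})}$ is $\Sym^{\ell-1}\otimes\det$ itself.)

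The missing idea is the alternating identity \eqref{eqn:Schur_poly_fund_identity}, in its $L$-function form Lemma~\ref{lem:Main_Lemma}, which removes every Schur functor other than $\Sym$ and $\Lambda$. Take an arbitrary cuspidal summand $\tau$ of $\Sym^{k}(\pi)$ and twist both sides by $\widetilde{\tau}$. The factor $L^{T}(s,\Sym^{k}(\pi)\times\widetilde{\tau})$ has a pole at $s=1$. The even factors with $i\ge1$ do not vanish there; this is where automorphy of $\widetilde{\Lambda^{j}(\pi)}\boxtimes\tau$ is used. So some odd factor $L^{T}(s,\Sym^{k-1-2i}(\pi)\times\Lambda^{2i+1}(\pi)\boxtimes\widetilde{\tau})$ has a pole, with $i$ depending on $\tau$. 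Cuspidality of $\Sym^{k-1-2i}(\pi)$ then gives $\Sym^{k-1-2i}(\pi)\prec\widetilde{\Lambda^{2i+1}(\pi)}\boxtimes\tau$, hence $\binom{n}{2i+1}r\ge\binom{n+k-2-2i}{n-1}$. This is why $\delta$ is a minimum over $i$, and it handles every $\tau$ at once without any fixed-$i$ pole.

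Your Step 3 inherits the same problem. Its quantifiers are reversed: ``any summand of $\Sym^{k-\gamma+1}(\pi)$ arises from some $\eta_j$'' bounds only some $\eta_j$. Also, one-level descent through $\pi^{\vee}$ is not justified under the hypotheses, for the same nonvanishing reason. The paper instead proves, by induction on $\alpha$, that every cuspidal summand of $\Sym^{k-\gamma+\alpha}(\pi)$ has degree at least $\delta_{n,k-\gamma+\alpha}(\alpha)$, applying the identity at each level. Since the pole may land in $\Sym^{\ell-1-2i}(\pi)\times\Lambda^{2i+1}(\pi)$ with $i\ge1$, the argument splits into three cases:
\begin{enumerate}[(i)]
\item $i=0$: divide the inductive bound by $n$;
\item the landing level is still above $k-\gamma$: divide the inductive bound by $\binom{n}{2i+1}\le n^{2i+1}$;
\item the landing level is at most $k-\gamma$: use cuspidality directly.
\end{enumerate}
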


We prove the two theorems by establishing a lower bound for the degrees of the isobaric summands of $\Sym^{k}(\pi)$, given by $\delta_{n,k}(\gamma)$.
For $n=2$, $k=7$, and $\gamma = 2$, the lower bound for the degrees of the isobaric summands of $\Sym^{k}(\pi)$, given by $\delta_{n,k}(\gamma)$, is sharp. This will be discussed in Section~\ref{sec:consequences}.

\subsection*{Outline of the article}
In Section 2, we cover the necessary background and prove a key Schur polynomial identity~\eqref{eqn:Schur_poly_fund_identity}. In Section 3, we prove Theorems~\ref{thm:Main_Theorem_intro} and~\ref{thm:Main_Theorem_var1_intro} and discuss some asymptotics of our bounds. In Section 4, we prove an auxiliary theorem regarding the symmetric square lift. In Section 5, we illustrate our bounds with some examples.

\section*{Acknowledgements}
The author would like to thank Nahid Walji and Michael Bennett for their supervision and helpful discussions.
The author was supported in part by an NSERC Discovery Grant.

\section{Background}

\subsection{Automorphic $L$-functions}
Let $F$ be a number field and $\mathbb{A}_{F}$ be its ad\`ele ring.
Let $\pi$ be an irreducible, automorphic representation for $\GL_{n}(\mathbb{A}_{F})$. The local $L$-function $L_{v}(s, \pi)$ at a non-archimedean place $v$ of $F$ is given by
\begin{align*}
    L_{v}(s,\pi) &= \det \left( I_{n} - A_{v}(\pi)(\mathrm{N}v)^{-s} \right)^{-1} \\
    &= \det \left( I_{n} - 
    \begin{pmatrix}
    \alpha_{1}(v, \pi) &  & & \\
     & \alpha_{2}(v, \pi) & & \\
    & & \ddots & \\
    & & & \alpha_{n}(v, \pi)
    \end{pmatrix}
    (\mathrm{N}v)^{-s} \right)^{-1}
\end{align*}
for $\Re(s) > 1$, where $\alpha_{1}(v, \pi), \dots, \alpha_{n}(v, \pi)$ are the Satake parameters. If $\pi$ is unramified at $v$, then $\alpha_{i}(v, \pi) \neq 0$ for all $i$. The standard $L$-function associated to $\pi$ is 
\begin{align*}
    L(s, \pi) = \prod_{v} L_{v}(s, \pi),
\end{align*}
where $v$ runs over all the places of $F$.
It is known that $L(s, \pi)$ has a simple pole at $s=1$ if and only if $\pi$ is equivalent to the trivial Hecke character. 

In this article, it suffices to consider the incomplete $L$-function associated to $\pi$, given by
\begin{align*}
    L^{T}(s, \pi) = \prod_{v \notin T} L_{v}(s, \pi),
\end{align*}
where $T$ is a finite set containing all archimedean places and the places at which $\pi$ is ramified.

\subsection{Isobaric representations}
Given cuspidal automorphic representations $\pi_{i}$ for $\GL(n_i)$ for $1 \leq i \leq k$ over a number field $F$, there exists an irreducible automorphic representation $\Pi$ for $\GL(n)$ over $F$ such that
\begin{align*}
    L(s,\Pi) = \prod_{j=1}^{k} L(s,\pi_{j}) ,
\end{align*}
where $n = n_{1} + n_{2} + \cdots + n_{k}$~\cite{Jacquet_Shalika_On_Euler_products_1981}. Each $\pi_{j}$ is called an isobaric summand of $\Pi$. 
We call $\Pi$ an isobaric automorphic representation and denote it by $\pi_{1} \boxplus \cdots \boxplus \pi_{k}$.
Let
\begin{align*}
    \mathcal{A}(\GL_{n}(\mathbb{A}_{F})) = \{ \text{equivalence classes of (unitary) isobaric automorphic representations for } \GL_{n}(\mathbb{A}_{F}) \}
\end{align*}
and let $\mathcal{A}_{0}(\GL_{n}(\mathbb{A}_{F})) \subset \mathcal{A}(\GL_{n}(\mathbb{A}_{F}))$ be the subset consisting of cuspidal automorphic representations.

We denote by $\mathcal{N}(\Pi)$ the number of isobaric summands of $\Pi$.
It is worth noting that $\mathcal{N}(\Pi) = 1$ if and only if $\Pi$ is cuspidal. 
We employ the same notation as in~\cite{Walji_conjectural_decomposition_2022} that if $\pi_{j}$ is an isobaric summand of $\Pi$, we write $\pi_{j} \prec \Pi$.

\subsection{Symmetric powers and exterior powers $L$-functions}
Let $\pi \in \mathcal{A}(\GL_{n}(\mathbb{A}_{F}))$.
Consider the usual symmetric $k$-th power representation $\Sym^{k}: \GL_{n}(\CC) \to \GL_{m}(\CC)$. We associate to $\pi$ its (incomplete) symmetric $k$-th power $L$-function 
\begin{align*}
    L^{T}(s,\pi, \Sym^{k}) &= \prod_{v \notin T} L_{v}(s,\pi, \Sym^{k}) \\
    &= \prod_{v \notin T} \det \left( I_{m} - \Sym^{k}(A_{v}(\pi)) (\mathrm{N}v)^{-s} \right)^{-1} ,
\end{align*}
where $\Sym^{k}$ of the conjugacy class $A_{v}(\pi)$ can be represented as a diagonal matrix whose entries are the monomials occurring in a complete homogeneous symmetric polynomial in $X_{i} = \alpha_{i}(v,\pi)$.
The diagonal entries are precisely the elements of the set
\begin{align*}
    \left\{ \prod_{i=1}^{n} \alpha_{i}(v,\pi)^{k_{i}} \mid \sum_{i=1}^{n} k_{i} = k, \text{ where each } k_{i} \geq 0 \right\} .
\end{align*}
Langlands functoriality~\cite{Langlands_functoriality_1979} predicts that there exists an isobaric automorphic representation $\Pi \in \mathcal{A}(\GL_{m}(\mathbb{A}_{F}))$ such that $L(s, \Pi) = L(s, \pi, \Sym^{k})$.
In this case, we say that the symmetric $k$-th power of $\pi$ is automorphic, and we denote $\Pi$ by $\Sym^{k} \pi$. For $n=2$, the symmetric square lift~\cite{Gelbart_Jacquet_sym2_1978}, symmetric cube lift~\cite{Kim_Shahidi_sym3_2002}, and symmetric fourth power lift~\cite{Kim_Sym4_2003} of $\pi$ are known to be automorphic. For $n \geq 3$, the automorphy of symmetric power lifts remains open in general.

One can define the $k$-th exterior power $L$-function in a similar fashion. Let $\Lambda^{k}: \GL_{n}(\CC) \to \GL_{m^{\prime}}(\CC)$ be the exterior $k$-th power representation, where $m^{\prime} = \binom{n}{k}$. We associate to $\pi$ the following (incomplete) exterior $k$-th power $L$-function 
\begin{align*}
    L^{T}(s,\pi, \Lambda^{k}) &= \prod_{v \notin T} L_{v}(s,\pi, \Lambda^{k}) \\
    &=\prod_{v \notin T} \det \left( I_{m^{\prime}} - \Lambda^{k}(A_{v}(\pi)) (\mathrm{N}v)^{-s} \right)^{-1} ,
\end{align*}
where $\Lambda^{k}$ of the matrix $A_{v}(\pi)$ can be represented as a diagonal matrix whose entries are the monomials occurring in an elementary symmetric polynomial in $X_{i} = \alpha_{i}(v,\pi)$.
The diagonal entries are precisely the elements of the set
\begin{align*}
    \left\{ \prod_{j=1}^{k} \alpha_{i_{j}}(v, \pi) \mid 1 \leq i_{1} < i_{2} < \cdots < i_{k} \leq n \right\} .
\end{align*}
Analogously, we say that the exterior $k$-th power of $\pi$ is automorphic if there exists $\Pi^{\prime} \in \mathcal{A}(\GL_{m^{\prime}}(\mathbb{A}_{F}))$ such that $L(s,\Pi^{\prime}) = L(s,\pi, \Lambda^{k})$. In this case, we denote $\Pi^{\prime}$ by $\Lambda^{k} \pi$.
When $n=3$, both the exterior square and cube lifts are automorphic. When $n=4$, the exterior square lift~\cite{Kim_Sym4_2003}, the exterior cube lift, and the exterior fourth power lift are all automorphic.

\subsection{Rankin--Selberg $L$-functions}
Let $\pi \in \mathcal{A}(\GL_{n}(\mathbb{A}_{F}))$ and $\pi^{\prime} \in \mathcal{A}(\GL_{m}(\mathbb{A}_{F}))$. We define the following (incomplete) Rankin--Selberg $L$-function associated to $\pi$ and $\pi^{\prime}$ by 
\begin{align*}
    L^{T}(s, \pi \times \pi^{\prime}) &= \prod_{v \notin T} L_{v}(s, \pi \times \pi^{\prime}) \\
    &= \prod_{v \notin T} \det \left( I_{nm} - (A_{v}(\pi) \otimes A_{v}(\pi^{\prime})) (\mathrm{N}v)^{-s} \right)^{-1} ,
\end{align*}
where $A_{v}(\pi) \otimes A_{v}(\pi^{\prime})$ can be represented as a diagonal matrix whose entries are precisely the elements of the set
\begin{align*}
    \left\{ \alpha_{i}(v, \pi) \alpha_{j}(v, \pi^{\prime}) \mid 1 \leq i \leq n \text{ and } 1 \leq j \leq m \right\} .
\end{align*}
Suppose that $\pi$ and $\pi'$ are cuspidal. Jacquet and Shalika~\cite{Jacquet_Shalika_On_Euler_products_1981} showed that $L^{S}(s, \pi \times \pi^{\prime})$ is entire unless $\pi^{\prime} \simeq \widetilde{\pi}$, where $\widetilde{\pi}$ is the dual of $\pi$. In this case, $L^{S}(s, \pi \times \pi^{\prime})$ has a simple pole at $s = 1$.

\subsection{Schur polynomials} 
We refer the reader to~\cite[Appendix A]{Fulton_Harris_representation} for a detailed account of Schur polynomials.
Let $\lambda = (\lambda_{1}, \dots, \lambda_{n})$ be a partition of $d$ with $\lambda_{1} \geq \cdots \geq \lambda_{n}$. The Schur polynomial associated to $\lambda$ is defined by
\begin{align*}
    S_{\lambda} = \frac{\det (x_{j}^{\lambda_{i}+n-i})}{\det (x_{j}^{n-i})} = \frac{\det (x_{j}^{\lambda_{i}+n-i})}{\Delta} ,
\end{align*}
where $\Delta = \prod_{i < j} (x_{i}-x_{j})$ denotes the Vandermonde determinant.


The following identity is known as the Littlewood--Richardson rule, which describes the multiplication of two Schur polynomials.
\begin{lem}
Let $\lambda$ be a partition of $d$ and $\mu$ a partition of $m$. Then
\begin{equation*}
    S_{\lambda} \cdot S_{\mu} = \sum_{\nu} N_{\lambda \mu \nu} S_{\nu} ,
\end{equation*}
where the sum is over all partitions $\nu$ of $d + m$ (each with at most $n$ parts) and
$N_{\lambda \mu \nu}$ is the number of ways the Young diagram for $\lambda$ can be expanded to that of $\nu$ by a strict $\mu$-expansion.
\end{lem}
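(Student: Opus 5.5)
The plan is to reduce the Littlewood--Richardson rule to an identity of alternating polynomials. The bialternant formula writes $S_\lambda = a_{\lambda+\rho}/\Delta$ with $\rho=(n-1,\dots,1,0)$ and $a_\alpha=\det(x_j^{\alpha_i})$. Multiplying by $\Delta$ turns the claim into
\begin{equation*}
S_\mu \cdot a_{\lambda+\rho} = \sum_{\nu} N_{\lambda\mu\nu}\, a_{\nu+\rho}.
\end{equation*}
I will establish this identity in two steps: first a cancellation formula that expresses the product in terms of semistandard tableaux of shape $\mu$, and then a sign-reversing involution that removes the tableaux which are not of ``lattice'' (strict expansion) type.

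\textbf{Step 1 (cancellation formula).} I use the combinatorial description $S_\mu=\sum_{T} x^{T}$, where $T$ runs over semistandard tableaux of shape $\mu$ with entries in $\{1,\dots,n\}$. This description follows from the bialternant definition via the Jacobi--Trudi identity, or directly by Lindström--Gessel--Viennot; I take it as standard, as in \cite[Appendix A]{Fulton_Harris_representation}. Expanding $a_{\lambda+\rho}=\sum_{w}\operatorname{sgn}(w)\,x^{w(\lambda+\rho)}$ and using the symmetry of $S_\mu$ gives
\begin{equation*}
S_\mu\, a_{\lambda+\rho}=\sum_{T} a_{\lambda+\rho+\mathrm{wt}(T)}.
\end{equation*}
Here $\mathrm{wt}(T)$ is the content vector of $T$. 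A term vanishes when two entries of the exponent coincide, and otherwise equals $\pm a_{\nu+\rho}$ after sorting.

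\textbf{Step 2 (involution).} This is the main obstacle. Read the word of $T$ in reverse reading order: right to left across rows, top row first. Call $T$ \emph{$\lambda$-good} if, for every prefix $p$ of this word, $\lambda+\mathrm{wt}(p)$ is a partition. For a bad $T$, let $p$ be the first prefix that fails, and let $i$ be the index with $\lambda_i+\mathrm{wt}_i(p)=\lambda_{i+1}+\mathrm{wt}_{i+1}(p)-1$. Apply the Bender--Knuth-type crystal reflection $s_i$ to the letters $i,i+1$ in the remaining suffix. This produces $T'$ whose exponent vector $\lambda+\rho+\mathrm{wt}(T')$ is the image of $\lambda+\rho+\mathrm{wt}(T)$ under the transposition of coordinates $i,i+1$. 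Hence the two terms cancel, since $a$ is alternating. One must check three things: $T'$ is still semistandard, $T'$ has the same first failure point $p$ with the same index $i$ (so the map is an involution), and good tableaux are fixed. This is exactly the Stembridge/Gasharov argument, and I expect the bookkeeping here to be the hardest part.

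\textbf{Conclusion.} For a good $T$, every prefix keeps $\lambda+\mathrm{wt}$ a partition, so $\lambda+\rho+\mathrm{wt}(T)$ is strictly decreasing and contributes $+a_{\nu+\rho}$ with $\nu=\lambda+\mathrm{wt}(T)$. Finally, good tableaux $T$ of shape $\mu$ with $\lambda+\mathrm{wt}(T)=\nu$ correspond bijectively to strict $\mu$-expansions of $\lambda$ to $\nu$. The bijection adds $\mu_1$ boxes labelled $1$, then $\mu_2$ boxes labelled $2$, and so on, recording in row $r$ of $T$ the rows of $\nu$ where the label-$r$ boxes go. The condition that no two boxes with the same label lie in one column translates to semistandardness, and the lattice word condition matches goodness. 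Counting gives $N_{\lambda\mu\nu}$, and dividing by $\Delta$ yields the lemma.
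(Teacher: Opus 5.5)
The paper gives no proof of this lemma; it quotes the Littlewood--Richardson rule from Fulton--Harris. Your argument therefore stands on its own. Step 1 and the overall plan are sound, but there are two problems.

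\textbf{The involution step.} The step you defer as ``bookkeeping'' is the actual content of the proof, and it is not Stembridge's argument verbatim. Stembridge truncates by columns, and gluing is immediate there because the failing column contains $i+1$ but not $i$. You truncate along the row reading word instead. If the failing letter $i+1$ sits at $(r_0,c)$, the suffix is row $r_0$ to the left of column $c$ together with all lower rows. This region is not a skew shape. A priori, an $i+1$ in row $r_0$ that your map turns into $i$ could land directly below an $i$ of row $r_0-1$, and that $i$ lies in the frozen prefix. You need two facts.

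(a) Suffix cells in columns $\ge c$ lie below $T(r_0,c)=i+1$, so their entries are $\ge i+2$.

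(b) Every entry of row $r_0-1$ in columns $\le c$ is $\le i-1$. This is where minimality of $p$ is used. Suppose not, and let $f$ be the last column of row $r_0$ containing $i+1$. Then $T(r_0-1,t)=i$ for all $c\le t\le f$. These $i$'s are read after every $i+1$ of row $r_0-1$. They are also read before the $f-c+1$ letters $i+1$ of row $r_0$ that end $p$. Hence $\mathrm{wt}_{i+1}-\mathrm{wt}_i$ at $p$ is at most its value at the good prefix ending just before the first $i$ of row $r_0-1$. This contradicts the failure at $p$.

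By (a) and (b), all $i$'s and $i+1$'s of the suffix lie in the skew shape $\{(r,t)\in\mu : r\ge r_0,\ t<c\}$. The cells adjacent to this region carry entries $\le i-1$ (above it), $i+1$ (at $(r_0,c)$), or $\ge i+2$ (to its right in lower rows). So applying Bender--Knuth on that region keeps $T'$ semistandard. The crystal reflection $\sigma_i$ applied to its reading word would also work; these are different maps, so you should pick one. Your weight computation and the involutivity then go through.

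\textbf{The final bijection.} The map you describe is the right one: row $r$ of $T$ lists the rows of $\nu$ that receive the label-$r$ boxes. But you have matched the conditions backwards.

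The lattice condition on the expansion is tested right after the $(r+1)$'s of a row $q$ are read. It says the number of $(r+1)$'s in rows $1,\dots,q$ is at most the number of $r$'s in rows $1,\dots,q-1$. That is column-strictness between rows $r$ and $r+1$ of $T$.

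Conversely, ``no two equal labels in a column, and a Young diagram at every stage'' means each $\lambda^{(r)}/\lambda^{(r-1)}$ is a horizontal strip. This is exactly $\lambda$-goodness of the prefixes ending inside row $r$ of $T$. The reason is that reading row $r$ of $T$ from right to left adds the label-$r$ boxes from the lowest row upward.

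With the correspondence swapped back, the good tableaux of content $\nu-\lambda$ are counted by $N_{\lambda\mu\nu}$, and the proof closes.
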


Another related concept is the monomial symmetric function. It also forms a basis for the degree $d$ homogeneous symmetric polynomials in $n$ variables.
The monomial symmetric function associated to the partition $\lambda = (\lambda_{1}, \dots, \lambda_{n})$ is defined as
\begin{align*}
    M_{\lambda} = \sum_{\alpha} x_{1}^{\alpha_{1}} \cdots x_{n}^{\alpha_{n}} ,
\end{align*}
where the sum is over all permutations $\alpha = (\alpha_{1}, \dots, \alpha_{n})$ of $(\lambda_{1}, \dots, \lambda_{n})$.

To relate monomial symmetric functions to Schur polynomials, we introduce the Kostka numbers.
Let $\mu = (\mu_{1}, \dots, \mu_{m})$ and $\lambda = (\lambda_{1}, \dots, \lambda_{n})$ be partitions of $d$. The Kostka number $K_{\mu \lambda}$ is defined as the number of different semi-standard Young tableaux of shape $\mu$ and weight $\lambda$. Equivalently, it is the number of ways to fill the boxes of the Young diagram of $\mu$ with $\lambda_{1}$ entries of $1$, $\lambda_{2}$ entries of $2$, and so on, such that the entries are non-decreasing along each row and strictly increasing down each column.

The following lemma describes the relation between Schur polynomials, monomial symmetric functions, and Kostka numbers.
\begin{lem}
We have
\begin{equation*}
    S_{\mu} = \sum_{\lambda} K_{\mu \lambda} M_{\lambda} .
\end{equation*}
\end{lem}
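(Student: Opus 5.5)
The claim $S_{\mu} = \sum_{\lambda} K_{\mu\lambda} M_{\lambda}$ is equivalent to the combinatorial (tableau) description of the Schur polynomial. My plan is to prove that description and then collect monomials by symmetry. Concretely, I will show
\begin{equation*}
S_{\mu} = \sum_{T} x^{T},
\end{equation*}
where $T$ runs over semi-standard Young tableaux of shape $\mu$ with entries in $\{1,\dots,n\}$, and $x^{T} = \prod_i x_i^{(\text{number of } i \text{ in } T)}$.

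Granting this, the lemma follows in two steps.

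1. The bialternant $\det(x_j^{\mu_i+n-i})/\Delta$ is a quotient of two alternating polynomials, so it is symmetric. Hence the coefficient of $x^{\alpha}$ depends only on the partition $\lambda$ obtained by sorting $\alpha$.
2. For a partition $\lambda$, the coefficient of $x^{\lambda}$ in the tableau sum is, by definition, the number of SSYT of shape $\mu$ and weight $\lambda$, which is $K_{\mu\lambda}$.

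Grouping monomials into the $M_{\lambda}$ therefore gives the stated identity.

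To prove the tableau formula from the bialternant definition, I would use the Jacobi--Trudi identity $S_{\mu} = \det(h_{\mu_i - i + j})$. This comes from expanding $\det(x_j^{\mu_i+n-i})$ using the generating function $\prod_j (1-x_jt)^{-1} = \sum h_r t^r$; the standard route is to multiply the matrix $(x_j^{\mu_i+n-i})$ against the matrix of elementary symmetric functions in the remaining variables, as in Fulton--Harris Appendix A. Then I would apply the Lindström--Gessel--Viennot lemma. Each $h_r$ is a weighted sum over lattice paths, and $\det(h_{\mu_i-i+j})$ counts families of non-intersecting paths. These families are in bijection with SSYT of shape $\mu$, row $i$ being read off from path $i$.

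An alternative is to avoid Jacobi--Trudi and prove directly, by induction on $n$, that the tableau sum equals the bialternant. This uses the branching rule $s_{\mu}(x_1,\dots,x_n) = \sum_{\nu} s_{\nu}(x_1,\dots,x_{n-1})\, x_n^{|\mu|-|\nu|}$, where $\nu$ ranges over partitions interlacing $\mu$. To verify the branching rule for the bialternant, perform column operations on $\det(x_j^{\mu_i+n-i})$: subtract $x_n$ times each column suitably, then expand along the last variable, giving a sum over interlacing sequences.

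The main obstacle is this determinantal step, namely the sign-cancellation argument, either via LGV or via the interlacing expansion. Everything else is bookkeeping. Since the paper cites Fulton--Harris, it would also be acceptable to quote the tableau formula (or equivalently Jacobi--Trudi) from there and present only the short symmetry-and-grouping argument.
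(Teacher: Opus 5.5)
Your proposal is correct, but it takes a different route from the paper, which gives no argument and simply cites equation (A.19) of Fulton--Harris. You instead make the statement self-contained. First you prove the tableau formula $S_\mu=\sum_T x^T$ from the bialternant definition, either via Jacobi--Trudi and Lindstr\"om--Gessel--Viennot or via the branching rule and induction on $n$. Then you use the symmetry of the bialternant to group monomials. That reduction is sound, and weights $\lambda$ with more than $n$ parts cause no trouble, since $M_\lambda=0$ in $n$ variables.

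In the branching variant, your instruction ``subtract $x_n$ times each column suitably'' should be made precise. Set $\ell_i=\mu_i+n-i$. For $1\le i\le n-1$, replace the row of exponent $\ell_i$ by itself minus $x_n^{\ell_i-\ell_{i+1}}$ times the row of exponent $\ell_{i+1}$. This kills every $x_n$-entry except $x_n^{\mu_n}$, and it turns the $(i,j)$ entry for $i,j<n$ into $(x_j-x_n)\sum_{m=\ell_{i+1}}^{\ell_i-1}x_j^m x_n^{\ell_i-1-m}$. The factor $\prod_{j<n}(x_j-x_n)$ then cancels against $\Delta_n/\Delta_{n-1}$. Multilinearity yields exactly the sum over $\nu$ with $\mu_{i+1}\le\nu_i\le\mu_i$, with $x_n$-exponent $|\mu|-|\nu|$.

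The citation buys brevity. Your route buys an explicit proof of the tableau formula, which is precisely what the paper later invokes informally for its correspondence table: that $S_{(k)}=h_k$ and $S_{(1^k)}=e_k$ list the eigenvalues of $\Sym^k(A_v(\pi))$ and $\Lambda^k(A_v(\pi))$ with multiplicity. The cost is that you must carry out the one genuinely nontrivial determinantal step yourself.
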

\begin{proof}
See equation (A.19) in~\cite[p.458]{Fulton_Harris_representation}.
\end{proof}

We now record a fundamental identity for Schur polynomials that will play a key role in subsequent proofs.
\begin{lem}\label{lem:Schur_poly_fund_identity}
We have the following Schur polynomial identity. Note that all Schur polynomials involved are in $n$ variables.
\begin{align}\label{eqn:Schur_poly_fund_identity}
    \sum_{i=0}^{\floor{\frac{n}{2}}} S_{(k-2i)} \cdot S_{(\underbrace{\scriptstyle 1,1,\dots,1}_{2i})}
    = \sum_{i=0}^{\ceil{\frac{n}{2}}-1} S_{(k-2i-1)} \cdot S_{(\underbrace{\scriptstyle 1,1,\dots,1}_{2i+1})} .
\end{align}
\end{lem}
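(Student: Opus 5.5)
The plan is to recognize $S_{(m)} = h_m$ (complete homogeneous) and $S_{(1^j)} = e_j$ (elementary), both in $n$ variables, with $e_j = 0$ for $j > n$. Then the identity \eqref{eqn:Schur_poly_fund_identity} reads
\begin{equation*}
    \sum_{j=0}^{n} (-1)^{j} h_{k-j}\, e_{j} = 0 ,
\end{equation*}
where the even-$j$ terms make up the left side and the odd-$j$ terms the right side. The ranges match: even $j=2i\le n$ means $i\le \floor{n/2}$, and odd $j=2i+1\le n$ means $i\le \ceil{n/2}-1$. Here $h_m=0$ for $m<0$, and $h_0=1$.

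So the proof reduces to the classical relation between $h$ and $e$, which I would derive from generating functions. Set
\begin{equation*}
    H(t)=\prod_{l=1}^{n}(1-x_l t)^{-1}=\sum_{m\ge 0} h_m t^m, \qquad E(t)=\prod_{l=1}^{n}(1+x_l t)=\sum_{j=0}^{n} e_j t^j .
\end{equation*}
Then $H(t)E(-t)=1$. Comparing coefficients of $t^k$ for $k\ge 1$ gives $\sum_{j}(-1)^j e_j h_{k-j}=0$, which is exactly the desired identity after splitting by parity of $j$.

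The only step needing care is justifying $S_{(m)}=h_m$ and $S_{(1^j)}=e_j$ from the bialternant definition given in the paper. I would invoke the preceding lemma $S_\mu=\sum_\lambda K_{\mu\lambda}M_\lambda$. For $\mu=(m)$, a one-row shape, every weight $\lambda$ admits exactly one semistandard filling (entries weakly increasing along the row), so $K_{(m)\lambda}=1$ and $S_{(m)}=\sum_\lambda M_\lambda=h_m$. For $\mu=(1^j)$, a single column, the entries must be strictly increasing, so $K_{(1^j)\lambda}=1$ if $\lambda=(1^j)$ and $0$ otherwise. Hence $S_{(1^j)}=M_{(1^j)}=e_j$, which vanishes when $j>n$ since a partition with more than $n$ parts does not occur.

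I should also state the edge convention: the identity is meant for $k\ge 1$, so that terms with $k-j<0$ vanish, and the case $k\ge n$ is the one used later. For $k=0$ the identity would fail, since $H(t)E(-t)=1$ has constant term $1$. I expect no real obstacle beyond matching the summation ranges and the conventions for $h_m$ with negative index.
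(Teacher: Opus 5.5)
Your proof is correct, but it takes a different route from the paper.

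The paper stays in the Schur basis. By the Pieri case of the Littlewood--Richardson rule, each product $S_{(k-j)}\cdot S_{(1^j)}$ is the sum of two consecutive hook Schur polynomials $S_{(k-j+1,1^{j-1})}+S_{(k-j,1^{j})}$, with the outer hooks at $j=0$ and $j=n$ read as $0$. The alternating sum over $j$ then telescopes, and splitting by parity gives the identity.

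You instead identify $S_{(m)}=h_m$ and $S_{(1^j)}=e_j$ via the Kostka lemma. You then read $\sum_{j=0}^{n}(-1)^j h_{k-j}e_j=0$ off the $t^k$ coefficient of $H(t)E(-t)=1$.

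Your argument needs no Littlewood--Richardson input and is shorter. It also makes the range of validity explicit: $k\ge 1$ with $h_m=0$ for $m<0$, and the identity fails at $k=0$. The paper's write-up states its boundary conventions loosely and tacitly takes $k\ge n$, which is the case needed in Lemma~\ref{lem:Main_Lemma}.

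What the paper's telescoping buys in return is finer information. It shows exactly which irreducible constituents the two sides share, namely the hooks $S_{(k-j+1,1^{j-1})}$ for $1\le j\le n$. In other words, it shows how the matching works at the level of the $\GL_n(\CC)$-summands of $\Sym^{k-j}\otimes\Lambda^{j}$.
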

\begin{proof}
From the Littlewood--Richardson rule, we have the following identities:
\begin{align*}
    S_{(k-1)} \cdot S_{(1)} &= S_{(k)} + S_{(k-1,1)}, \\
    S_{(k-2)} \cdot S_{(1,1)} &= S_{(k-1,1)} + S_{(k-2,1,1)}, \\
    &\vdots \\
    S_{(k-i)} \cdot S_{(\underbrace{\scriptstyle 1,1,\dots,1}_{i \text{ times}})} &= S_{(k-i+1,\underbrace{\scriptstyle 1,1,\dots,1}_{i-1 \text{ times}})} + S_{(k-i,\underbrace{\scriptstyle 1,1,\dots,1}_{i \text{ times} })}, \\
    &\vdots \\
    S_{(k-n)} \cdot S_{(\underbrace{\scriptstyle 1,1,\dots,1}_{n \text{ times}})} &= S_{(k-n+1,\underbrace{\scriptstyle 1,1,\dots,1}_{n-1 \text{ times}})} .
\end{align*}
The general formula is true for all $0 \leq i \leq n$ given that we have the trivial identification 
\begin{equation*}
    S_{(m+1,\underbrace{\scriptstyle 1,1,\dots,1}_{n-1 \text{ times}})} = 0 \quad \text{ and } \quad S_{(m-n,\underbrace{\scriptstyle 1,1,\dots,1}_{n \text{ times} })} = 0 .
\end{equation*}
Consider the following two equations
\begin{align*}
    S_{(k-i)} \cdot S_{(\underbrace{\scriptstyle 1,1,\dots,1}_{i})} &= S_{(k-i+1,\underbrace{\scriptstyle 1,1,\dots,1}_{i-1})} + S_{(k-i,\underbrace{\scriptstyle 1,1,\dots,1}_{i})} ,\\
    S_{(k-i-1)} \cdot S_{(\underbrace{\scriptstyle 1,1,\dots,1}_{i+1})} &= S_{(k-i,\underbrace{\scriptstyle 1,1,\dots,1}_{i})} + S_{(k-i-1,\underbrace{\scriptstyle 1,1,\dots,1}_{i+1})} .
\end{align*}
Add the left-hand side of the first equation to the right-hand side of the second equation, we obtain
\begin{align*}
    S_{(k-i-1,\underbrace{\scriptstyle 1,1,\dots,1}_{i+1})}
    + 
    S_{(k-i)} \cdot S_{(\underbrace{\scriptstyle 1,1,\dots,1}_{i})} 
    = 
    S_{(k-i-1)} \cdot S_{(\underbrace{\scriptstyle 1,1,\dots,1}_{i+1})}
    +
    S_{(k-i+1,\underbrace{\scriptstyle 1,1,\dots,1}_{i-1})} .
\end{align*}
Lastly, we sum over all even integers $i = 0,2,4, \dots, 2 \floor{n/2}$.
\end{proof}

Let $\pi \in \mathcal{A}_{0}(\GL_{n}(\mathbb{A}_{F}))$. For any $v \notin T$, where $T$ is the set containing all archimedean places and the places at which $\pi$ is ramified, recall that $A_{v}(\pi)$ denotes the Langlands conjugacy class of $\pi$ at $v$, with eigenvalues $\alpha_{1}(v,\pi), \dots, \alpha_{n}(v,\pi)$.
We observe that the multiset of monomials, counted with multiplicity, appearing in $S_{(k)}(\alpha_{1}(v,\pi), \dots, \alpha_{n}(v,\pi))$ coincides with the set of eigenvalues of $\Sym^{k}(A_{v}(\pi))$.
Thus, there is a natural correspondence between the representation $\Sym^{k}(\pi)$ and the Schur polynomial $S_{(k)}$.
Similarly, we have a correspondence between the representation $\Lambda^{k}(\pi)$ and the Schur polynomial $S_{(\underbrace{\scriptstyle 1,1,\dots,1}_{k})}$. 
We conclude this section by presenting the correspondence table below.
\begin{center}
\begin{tabular}{c|c}
    \text{Representation} & \text{Schur polynomial} \\ [1mm]
    \hline
    $\Sym^{k}(\pi)$ & $S_{(k)}$ \\ [2mm]
    $\Lambda^{k}(\pi)$ & $S_{(\underbrace{\scriptstyle 1,1,\dots,1}_{k})}$ \\ [2mm]
    $\omega_{\pi}$ & $S_{(\underbrace{\scriptstyle 1,1,\dots,1}_{n})}$ \\ [2mm]
    $\omega_{\pi} \times \widetilde{\pi}$ & $S_{(\underbrace{\scriptstyle 1,1,\dots,1}_{n-1})}$ \\ [2mm]
    $\omega_{\pi}^{\otimes k} \times \widetilde{\Sym^{k}}(\pi)$ & $S_{(\underbrace{\scriptstyle k,k,\dots,k}_{n-1})}$ \\ [2mm]
    $\omega_{\pi} \times \widetilde{\Lambda^{k}}(\pi)$ & $S_{(\underbrace{\scriptstyle 1,1,\dots,1}_{n-k})}$ \\ [2mm]
    $\omega_{\pi} \times \Ad(\pi)$ & $S_{(2,\underbrace{\scriptstyle 1,1,\dots,1}_{n-2})} $
\end{tabular}
\end{center}
The correspondences can be established through a careful analysis of Schur polynomials via their combinatorial formula involving Kostka numbers.

\section{Main Theorem}

\begin{lem}\label{lem:Main_Lemma}
Let $\pi \in \mathcal{A}_{0}(\GL_{n}(\mathbb{A}_{F}))$. Let $\Sym^{0}$ denote the trivial representation and $\Sym^{1}$ denote the standard representation of $\GL_{n}(\mathbb{C})$. Let $T$ be a finite set consisting of all the finite places at which $\pi$ is ramified and all the archimedean places. For any integer $k \geq n$, we have, for $s$ in some suitable right half-plane,
\begin{align*}
    \prod_{i=0}^{\floor{\frac{n}{2}}} L^{T}(s,\pi,\Sym^{k-2i} \times \Lambda^{2i}) = \prod_{i=0}^{\ceil{\frac{n}{2}}-1} L^{T}(s,\pi,\Sym^{k-2i-1} \times \Lambda^{2i+1}) 
\end{align*}
or, equivalently,
\begin{align*}
    &L^{T}(s,\pi,\Sym^{k}) \prod_{i=1}^{\floor{\frac{n}{2}}} L^{T}(s,\pi,\Sym^{k-2i} \times \Lambda^{2i}) \\
    =& L^{T}(s,\pi,\Sym^{k-1} \times \std) \prod_{i=1}^{\ceil{\frac{n}{2}}-1} L^{T}(s,\pi,\Sym^{k-2i-1} \times \Lambda^{2i+1}) .
\end{align*}
\end{lem}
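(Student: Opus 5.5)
The identity is local, so I will prove it place by place. Since both sides are Euler products over $v \notin T$, it suffices to show that for each such $v$ the local factors agree:
\begin{align*}
    \prod_{i=0}^{\floor{\frac{n}{2}}} L_{v}(s,\pi,\Sym^{k-2i} \times \Lambda^{2i}) = \prod_{i=0}^{\ceil{\frac{n}{2}}-1} L_{v}(s,\pi,\Sym^{k-2i-1} \times \Lambda^{2i+1}) .
\end{align*}
Write $X = (\mathrm{N}v)^{-s}$ and $A = A_{v}(\pi)$, diagonal with entries $\alpha_{1}, \dots, \alpha_{n}$. Each local factor has the form $\det(I - \rho(A) X)^{-1}$ with $\rho = \Sym^{a} \otimes \Lambda^{b}$, so $\rho(A)$ is diagonal. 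Such a factor depends only on the multiset of its diagonal entries: it equals $\prod_{\beta}(1-\beta X)^{-1}$ over that multiset. A product of such factors is then governed by the multiset union of the eigenvalues.

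The first step is to identify these multisets. The eigenvalues of $\Sym^{a}(A) \otimes \Lambda^{b}(A)$ are the products $\beta\gamma$, with $\beta$ running over the monomials of $S_{(a)}(\alpha)$ and $\gamma$ over those of $S_{(1^{b})}(\alpha)$, each counted with multiplicity. This is exactly the multiset of monomials, with coefficients, in the polynomial $S_{(a)} S_{(1^{b})}$ evaluated at the $\alpha_{j}$. The product is formally a polynomial in the indeterminates $x_{1}, \dots, x_{n}$ with nonnegative integer coefficients, and the multiset is read off from that polynomial before specialising. This uses the correspondence $\Sym^{a} \leftrightarrow S_{(a)}$ and $\Lambda^{b} \leftrightarrow S_{(1^{b})}$ recorded at the end of Section 2.

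Consequently, the eigenvalue multiset of the left-hand side is the monomial content of $\sum_{i} S_{(k-2i)} S_{(1^{2i})}$, and that of the right-hand side is the monomial content of $\sum_{i} S_{(k-2i-1)} S_{(1^{2i+1})}$. Lemma~\ref{lem:Schur_poly_fund_identity} says these two polynomials are equal in $\ZZ[x_{1},\dots,x_{n}]$. Their coefficient vectors on monomials therefore agree, so the eigenvalue multisets coincide and the local factors are equal. Taking the product over $v \notin T$ gives the first identity for $\Re(s)$ large, where all the products converge absolutely; this convergence follows from the standard bounds on Satake parameters.

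The main point needing care is the passage from a polynomial identity to a multiset identity. This is valid because we compare coefficients in the free variables $x_{j}$ before specialising, and all coefficients are nonnegative. Cancellation after specialisation therefore never enters. The hypothesis $k \geq n$ ensures $k-2i \geq 0$ for every $i$ appearing, so every $\Sym^{k-2i}$ is defined (with $\Sym^{0}$ trivial), matching the range in Lemma~\ref{lem:Schur_poly_fund_identity}. For the equivalent second form, note that $\Lambda^{0}$ is trivial, so the $i=0$ term on the left is $L^{T}(s,\pi,\Sym^{k})$. Likewise $\Lambda^{1} = \std$, so the $i=0$ term on the right is $L^{T}(s,\pi,\Sym^{k-1} \times \std)$. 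Separating these terms from the products gives the stated form.
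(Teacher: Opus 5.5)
Your proof is correct and follows the paper's approach. The paper's proof is a one-line appeal to Lemma~\ref{lem:Schur_poly_fund_identity} together with the correspondence $\Sym^{a} \leftrightarrow S_{(a)}$, $\Lambda^{b} \leftrightarrow S_{(1^{b})}$, and you have supplied the place-by-place details it leaves implicit, namely reading the eigenvalue multiset of $\Sym^{a}\otimes\Lambda^{b}$ as the monomial content of $S_{(a)}S_{(1^{b})}$ before specialising.
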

\begin{proof}
This is a direct application of Lemma~\ref{lem:Schur_poly_fund_identity} together with the correspondence table.
\end{proof}

\begin{thm}\label{thm:Main_Theorem}
Let $\pi \in \mathcal{A}_{0}(\GL_{n}(\mathbb{A}_{F}))$ be a cuspidal automorphic representation. Fix an integer $k \geq n+1$. Assume that $\Sym^{m} (\pi)$ is automorphic for $k-n \leq m \leq k$, and cuspidal for $m = k-2i-1$ for all $0 \leq i \leq \floor{\frac{n-1}{2}}$. Further assume that the exterior power lift $\Lambda^{j}(\pi)$ is automorphic for all $2 \leq j \leq n-1$, and that the Rankin--Selberg products of its dual with every summand of $\Sym^{k}(\pi)$ are automorphic. Then,
\begin{equation*}
    \mathcal{N}(\Sym^{k}(\pi)) \leq \floor{ \binom{n+k-1}{n-1} \bigg/ \delta_{n,k}(1)}
\end{equation*}
where $\delta_{n,k}(1)$ is defined in~\eqref{eqn:delta_nk_gamma_def}.
\end{thm}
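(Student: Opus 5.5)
The plan is to show that every cuspidal isobaric summand $\eta \prec \Sym^{k}(\pi)$ has degree at least $\delta_{n,k}(1)$. Since the degrees of the summands add up to $\dim \Sym^{k} = \binom{n+k-1}{n-1}$, this gives $\mathcal{N}(\Sym^{k}(\pi)) \cdot \delta_{n,k}(1) \leq \binom{n+k-1}{n-1}$, and taking the floor yields the stated bound.

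\textbf{Step 1: pole counting.} Fix a cuspidal $\eta \prec \Sym^{k}(\pi)$ of degree $d$, and twist the identity of Lemma~\ref{lem:Main_Lemma} by $\widetilde{\eta}$. Concretely, I would multiply both sides by $\widetilde{\eta}$ at the level of Satake parameters and take the Rankin--Selberg $L$-functions $L^{T}(s, \cdot \times \widetilde{\eta})$ of both sides. The left side contains the factor $L^{T}(s, \Sym^{k}(\pi) \times \widetilde{\eta})$, which has a pole at $s=1$ by Jacquet--Shalika. To see that no cancellation can occur, I would use that every factor on both sides is a Rankin--Selberg $L$-function of automorphic isobaric representations. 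On the left these are $\Sym^{k-2i}(\pi) \boxtimes \Lambda^{2i}(\pi)$, paired with $\widetilde{\eta}$ through the hypothesis that the relevant Rankin--Selberg products with $\eta$ are automorphic. Such $L$-functions are nonvanishing at $s=1$ and have poles of nonnegative order. Hence the right side
\[
\prod_{i=0}^{\lceil n/2\rceil -1} L^{T}(s, \Sym^{k-2i-1}(\pi) \times \Lambda^{2i+1}(\pi) \times \widetilde{\eta})
\]
must also have a pole at $s=1$. So there is some $0 \leq i \leq \lfloor (n-1)/2 \rfloor$ with a pole in $L^{T}(s, \Sym^{k-2i-1}(\pi) \times (\Lambda^{2i+1}(\pi) \times \widetilde{\eta}))$.

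\textbf{Step 2: degree bound.} By hypothesis $\Sym^{k-2i-1}(\pi)$ is cuspidal. The product $\Lambda^{2i+1}(\pi)\times\widetilde{\eta}$ is assumed automorphic, hence isobaric. Its cuspidal summands are arbitrary, but a pole at $s=1$ forces $\widetilde{\Sym^{k-2i-1}}(\pi)$ to be one of its summands. Therefore
\[
\binom{n+k-2i-2}{n-1} = \dim \Sym^{k-2i-1} \leq \binom{n}{2i+1}\, d,
\]
that is, $d \geq \lceil \binom{n+k-2-2i}{n-1}/\binom{n}{2i+1}\rceil$. We do not know which $i$ occurs, so we take the minimum over $i$. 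This gives exactly $d \geq \delta_{n,k}(1)$, because for $\gamma=1$ the outer factor $n^{\gamma-1}$ equals $1$.

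\textbf{Main obstacle.} The delicate step is justifying the pole transfer in Step 1 rigorously. Poles of a product of incomplete $L$-functions at $s=1$ only determine the poles of the factors if we know each factor is nonzero and holomorphic-or-polar of controlled order at $s=1$. Two issues arise here. First, the exterior power factors and their products with $\widetilde{\eta}$ must be known to be isobaric automorphic. Second, we need the known nonvanishing of Rankin--Selberg $L$-functions on $\Re(s)=1$ for isobaric representations, together with the fact that removing finitely many Euler factors in $T$ does not change poles or zeros at $s=1$. I would spell out which automorphy hypotheses of Theorem~\ref{thm:Main_Theorem} feed each factor. In particular, the left side's $\Sym^{k-2i}(\pi)\times\Lambda^{2i}(\pi)$ factors require automorphy of $\Sym^{m}$ for $k-n\le m\le k$ together with the assumed Rankin--Selberg products. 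Then the pole order on the left is at least $1$ while every other factor has order at least $0$, so the pole must appear among the right-side factors.
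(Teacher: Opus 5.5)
Your proposal is correct and follows the paper's proof essentially step for step. Like the paper, you use the pole of $L^{T}(s,\Sym^{k}(\pi)\times\widetilde{\eta})$ and the nonnegative pole order of the even-index factors to transfer a pole to an odd-index factor via Lemma~\ref{lem:Main_Lemma}; cuspidality of $\Sym^{k-2i-1}(\pi)$ then makes it a summand of $\widetilde{\Lambda^{2i+1}(\pi)}\boxtimes\eta$, and the degree count minimized over $i$ gives $\delta_{n,k}(1)$.

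One caveat, which you share with the paper: for $i=0$ you need $\pi\boxtimes\widetilde{\eta}$ (equivalently $\widetilde{\pi}\boxtimes\eta$) to be automorphic, and this is not among the listed cases $2\le j\le n-1$. Via $\widetilde{\Lambda^{n-1}(\pi)}\simeq\pi\otimes\omega_{\pi}^{-1}$ those cases only give $\pi\boxtimes\eta$, so when you spell out which hypothesis feeds each factor, add this one explicitly.
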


\begin{proof}
Assume that $\Sym^{k}(\pi)$ is not cuspidal. Then it has a cuspidal summand $\tau$ for some $\GL(r)$ such that
\begin{equation*}
    r \leq \floor{\frac{1}{2}\binom{n+k-1}{n-1}} .
\end{equation*}
Then,
\begin{equation*}
    -\ord_{s=1} L^{T}(s,\Sym^{k}(\pi) \times \widetilde{\tau}) \geq 1 .
\end{equation*}
The automorphy assumptions on the Rankin--Selberg products of $\Lambda^{j}(\pi)$ with the dual of any summand of $\Sym^{k}(\pi)$, for any $2 \leq j \leq n-1$, imply that
\begin{align*}
    -\ord_{s=1} L^{T}(s,\Sym^{k-2i}(\pi) \times \Lambda^{2i}(\pi) \boxtimes \widetilde{\tau}) \geq 0 ,
\end{align*}
owing to the non-vanishing of Rankin--Selberg $L$-functions at $s=1$.
It follows from Lemma~\ref{lem:Main_Lemma}, which applies under our automorphy assumptions, that
\begin{align}\label{eqn:sum_L_functions_1}
    \sum_{i=0}^{\floor{\frac{n-1}{2}}} -\ord_{s=1} L^{T}(s,\Sym^{k-1-2i}(\pi) \times \Lambda^{2i+1}(\pi) \boxtimes \widetilde{\tau}) \geq 1 .
\end{align}
Hence, at least one of the $L$-functions in~\eqref{eqn:sum_L_functions_1} must have a pole at $s=1$.
Suppose that
\begin{align*}
    -\ord_{s=1} L^{T}(s,\Sym^{k-1-2i}(\pi) \times \Lambda^{2i+1}(\pi) \boxtimes \widetilde{\tau}) \geq 1.
\end{align*}
Then, under the cuspidality assumptions, it follows that
\begin{equation*}
    \Sym^{k-1-2i}(\pi) \prec \widetilde{\Lambda^{2i+1}(\pi)} \times \tau
\end{equation*}
and hence
\begin{equation*}
    \binom{n+k-2-2i}{n-1} \leq \binom{n}{2i+1} \, r .
\end{equation*}
A simple calculation gives
\begin{equation*}
    r \geq \ceil{\frac{\binom{n+k-2-2i}{n-1}}{\binom{n}{2i+1}}} .
\end{equation*}
Since we do not know exactly which $L$-function in~\eqref{eqn:sum_L_functions_1} has a pole at $s=1$, we can only conclude that
\begin{equation*}
    r \geq \min_{0 \leq i \leq \floor{\frac{n-1}{2}}} \ceil{\frac{\binom{n+k-2-2i}{n-1}}{\binom{n}{2i+1}}} = \delta_{n,k}(1).
\end{equation*}
The desired upper bound on $\mathcal{N}(\Sym^{k}(\pi))$ now follows.
\end{proof}

\begin{remark}\label{rmk:min_location_for_main_thm}
Let $k \geq n^{2+\varepsilon}$ for some $\varepsilon > 0$.
We show that the minimum of
\begin{align}\label{eqn:expression_of_delta_nk1}
    \ceil{\frac{\binom{n+k-2-2i}{n-1}}{\binom{n}{2i+1}}}
\end{align}
over $0 \le i \le \floor{\frac{n-1}{2}}$ is attained at $i=\floor{\frac{n}{4}}$. Since the cases $n = 2$ and $n = 3,4$ were treated by Ramakrishnan~\cite{Ramakrishnan_sym_power_cusp_form_2009} and Walji~\cite{Walji_conjectural_decomposition_2022}, respectively, we henceforth assume that $n \geq 5$. 
Note that $\binom{n+k-2-2i}{n-1}$ is decreasing as a function of $i$. Moreover, $\binom{n}{2i+1}$ increases for $0 \leq i \leq \floor{\frac{n}{4}}$ and decreases for $\floor{\frac{n}{4}} \leq i \leq \floor{\frac{n-1}{2}}$. Hence, it suffices to restrict our attention to the range $\floor{\frac{n}{4}} \leq i \leq \floor{\frac{n-1}{2}}$.

Consider the ratio
\begin{align}\label{eqn:ratio_for_remark_3.2.1}
\begin{aligned}
    \frac{\binom{n+k-2-2(i+1)}{n-1}}{\binom{n}{2(i+1)+1}} \bigg/ \frac{\binom{n+k-2-2i}{n-1}}{\binom{n}{2i+1}} 
    &= \frac{\binom{n+k-4-2i}{n-1}}{\binom{n+k-2-2i}{n-1}} \cdot \frac{\binom{n}{2i+1}}{\binom{n}{2i+3}} \\
    &= \frac{(k-1-2i)(k-2-2i)}{(n+k-2-2i)(n+k-3-2i)} \cdot \frac{(2i+3)(2i+2)}{(n-2i-1)(n-2i-2)} ,
\end{aligned}
\end{align}
where $\floor{\frac{n}{4}} \leq i \leq \floor{\frac{n-1}{2}}-1$.
The first factor $\frac{(k-1-2i)(k-2-2i)}{(n+k-2-2i)(n+k-3-2i)}$ is a decreasing function of $i$. This follows from the observation that $\frac{a-1}{b-1} < \frac{a}{b}$ for $0< a < b$.
Hence,
\begin{align*}
    \frac{(k-1-2i)(k-2-2i)}{(n+k-2-2i)(n+k-3-2i)} &\geq \frac{(k-1-2(\frac{n-1}{2}-1))(k-2-2(\frac{n-1}{2}-1))}{(n+k-2-2(\frac{n-1}{2}-1))(n+k-3-2(\frac{n-1}{2}-1))} \\
    &= \frac{(k-n+2)(k-n+1)}{(k+1)k} \\
    &> \frac{(k-n+1)^{2}}{k^{2}}.
\end{align*}
Next, we seek a lower bound for $\frac{(2i+3)(2i+2)}{(n-2i-1)(n-2i-2)}$. For fixed $n \geq 5$, this expression is an increasing function of $i$. Hence, the minimum is attained at $i = \floor{\frac{n}{4}}$.
Observe that $n - 2i = n - 2\floor{\frac{n}{4}} \leq n - \frac{2n}{4} = \frac{n}{2}$, and that $2i = 2\floor{\frac{n}{4}} \geq 2(\frac{n}{4}-1) = \frac{n}{2}-2$.
Then,
\begin{align}\label{eqn:binomial_ratio_bounds}
    \frac{(2i+3)(2i+2)}{(n-2i-1)(n-2i-2)} \geq \frac{(\frac{n}{2}+1)\frac{n}{2}}{(\frac{n}{2}-1)(\frac{n}{2}-2)} = \frac{(n+2)n}{(n-2)(n-4)} > \frac{(n+2)^{2}}{(n-2)^{2}}.
\end{align}
If the ratio in~\eqref{eqn:ratio_for_remark_3.2.1} is greater than $1$ for all $\floor{\frac{n}{4}} \leq i \leq \floor{\frac{n-1}{2}}-1$, then the quantity in~\eqref{eqn:expression_of_delta_nk1} attains its minimum at $i = \floor{\frac{n}{4}}$. It suffices to show that
\begin{align*}
    \left(\frac{k-n+1}{k}\right) \left( \frac{n+2}{n-2} \right)> 1 ,
\end{align*}
which is equivalent to showing that $4k > n^2+n-2$. This holds since $k \geq n^{2+\varepsilon} > n^{2}$.
\end{remark}

\begin{remark}\label{rmk:asymptotics_for_main_thm}
Let $k \geq n^{2+\varepsilon}$. We study the asymptotics of the bound
\begin{equation*}
    \mathcal{N}(\Sym^{k}(\pi)) \leq \floor{ \binom{n+k-1}{n-1} \bigg/ \ceil{\frac{\binom{n+k-2-2\floor{\frac{n}{4}}}{n-1}}{\binom{n}{2\floor{\frac{n}{4}}+1}}} } .
\end{equation*}
It suffices to analyze
\begin{equation*}
    \binom{n+k-1}{n-1} \bigg/ \ceil{\frac{\binom{n+k-2-2\floor{\frac{n}{4}}}{n-1}}{\binom{n}{2\floor{\frac{n}{4}}+1}}} .
\end{equation*}
Consider the ratio
\begin{align*}
    \frac{\binom{n+1+k-2-2\floor{\frac{n+1}{4}}}{n+1-1}}{\binom{n+1}{2\floor{\frac{n+1}{4}}+1}} \bigg/ \frac{\binom{n+k-2-2\floor{\frac{n}{4}}}{n-1}}{\binom{n}{2\floor{\frac{n}{4}}+1}} = \frac{\binom{n+k-1-2\floor{\frac{n+1}{4}}}{n}}{\binom{n+k-2-2\floor{\frac{n}{4}}}{n-1}} \cdot \frac{\binom{n}{2\floor{\frac{n}{4}}+1}}{\binom{n+1}{2\floor{\frac{n+1}{4}}+1}} .
\end{align*}
The first factor satisfies
\begin{align*}
    \frac{\binom{n+k-1-2\floor{\frac{n+1}{4}}}{n}}{\binom{n+k-2-2\floor{\frac{n}{4}}}{n-1}} 
    \geq \frac{\binom{n+k-1-2\floor{\frac{n}{4}}-2}{n}}{\binom{n+k-2-2\floor{\frac{n}{4}}}{n-1}} 
    = \frac{(k-2\floor{\frac{n}{4}}-1)(k-2\floor{\frac{n}{4}}-2)}{n(k+n-2\floor{\frac{n}{4}}-2)} .
\end{align*}
For the second factor, if $n \equiv 0,1,2 \pmod{4}$, then $\floor{\frac{n+1}{4}} = \floor{\frac{n}{4}}$, and 
\begin{align}\label{eqn:second_factor_analysis_012}
    \frac{\binom{n}{2\floor{\frac{n}{4}}+1}}{\binom{n+1}{2\floor{\frac{n+1}{4}}+1}} 
    = \frac{\binom{n}{2\floor{\frac{n}{4}}+1}}{\binom{n+1}{2\floor{\frac{n}{4}}+1}} 
    = \frac{n-2\floor{\frac{n}{4}}}{n+1} .
\end{align}
If $n \equiv 3 \pmod{4}$, then $\floor{\frac{n+1}{4}} = \floor{\frac{n}{4}}+1$ and $n = 4\floor{\frac{n}{4}}+3$. Hence, 
\begin{align}\label{eqn:second_factor_analysis_3}
    \frac{\binom{n}{2\floor{\frac{n}{4}}+1}}{\binom{n+1}{2\floor{\frac{n+1}{4}}+1}} 
    = \frac{\binom{n}{2\floor{\frac{n}{4}}+1}}{\binom{n+1}{2\floor{\frac{n}{4}}+3}} 
    = \frac{(2\floor{\frac{n}{4}}+3)(2\floor{\frac{n}{4}}+2)}{(n+1)(n-2\floor{\frac{n}{4}}-1)} 
    = \frac{n-2\floor{\frac{n}{4}}}{n+1} .
\end{align}
Therefore, the ratio 
\begin{align*}
    \frac{\binom{n+1+k-2-2\floor{\frac{n+1}{4}}}{n+1-1}}{\binom{n+1}{2\floor{\frac{n+1}{4}}+1}} \bigg/ \frac{\binom{n+k-2-2\floor{\frac{n}{4}}}{n-1}}{\binom{n}{2\floor{\frac{n}{4}}+1}} 
    \geq \frac{(k-2\floor{\frac{n}{4}}-1)(k-2\floor{\frac{n}{4}}-2)}{n(k+n-2\floor{\frac{n}{4}}-2)} \cdot \frac{n-2\floor{\frac{n}{4}}}{n+1}\gg \frac{k}{n} \to \infty
\end{align*}
as $n \to \infty$, since $k \geq n^{2+\varepsilon}$.

We have shown that 
\begin{align*}
    \frac{\binom{n+k-2-2\floor{\frac{n}{4}}}{n-1}}{\binom{n}{2\floor{\frac{n}{4}}+1}} \to \infty .
\end{align*}
Since $g(x) \to \infty$ implies $\frac{1}{\ceil{g(x)}} \sim \frac{1}{g(x)}$, we obtain 
\begin{align*}
    \binom{n+k-1}{n-1} \bigg/ \ceil{\frac{\binom{n+k-2-2\floor{\frac{n}{4}}}{n-1}}{\binom{n}{2\floor{\frac{n}{4}}+1}}}
    \sim \frac{\binom{n+k-1}{n-1}}{\binom{n+k-2-2\floor{\frac{n}{4}}}{n-1}} \cdot \binom{n}{2\floor{\frac{n}{4}}+1} .
\end{align*}
Moreover,
\begin{align*}
    \frac{\binom{n+k-1}{n-1}}{\binom{n+k-2-2\floor{\frac{n}{4}}}{n-1}}
    = \prod_{j=0}^{2\floor{\frac{n}{4}}} \frac{n+k-1-j}{k-j} 
    = \prod_{j=0}^{2\floor{\frac{n}{4}}} \left( 1+ \frac{n-1}{k-j}\right) 
    \leq \left( 1+ \frac{n-1}{n^{2+\varepsilon}-\frac{n}{2}}\right)^{\frac{n}{2}+1} ,
\end{align*}
which tends to $1$ as $n \to \infty$. Hence,
\begin{align*}
    \binom{n+k-1}{n-1} \bigg/ \ceil{\frac{\binom{n+k-2-2\floor{\frac{n}{4}}}{n-1}}{\binom{n}{2\floor{\frac{n}{4}}+1}}} \sim \binom{n}{2\floor{\frac{n}{4}}+1} .
\end{align*}
Note that the binomial coefficient $\binom{n}{2\floor{\frac{n}{4}}+1}$ is the central binomial coefficient except when $n \equiv 0 \pmod 4$, in which case
\begin{align*}
    \binom{n}{2\floor{\frac{n}{4}}+1} = \binom{n}{\frac{n}{2}+1} = \binom{n}{\frac{n}{2}} \cdot \frac{\frac{n}{2}}{\frac{n}{2} + 1} \sim \binom{n}{\frac{n}{2}} .
\end{align*}
By Stirling's formula (see e.g.~\cite[formula 6.1.37]{Abramowitz_Stegun_handbook_1964}), we conclude that
\begin{align*}
    \floor{\binom{n+k-1}{n-1} \bigg/ \ceil{\frac{\binom{n+k-2-2\floor{\frac{n}{4}}}{n-1}}{\binom{n}{2\floor{\frac{n}{4}}+1}}}} \sim \binom{n}{\floor{\frac{n}{2}}} \sim \frac{2^{n+\frac{1}{2}}}{\sqrt{\pi n}} ,
\end{align*}
which is independent of the specific value of $k$.
\end{remark}

In the following, we no longer assume that $\Sym^{m} (\pi)$ is cuspidal for $k-\gamma < m \leq k-1$, where $\gamma > 1$. Our goal is to obtain nontrivial bounds on the number of isobaric summands in the symmetric $k$-th power lift of $\pi$ under these weaker assumptions.
\begin{thm}\label{thm:Main_Theorem_var1}
Let $\pi \in \mathcal{A}_{0}(\GL_{n}(\mathbb{A}_{F}))$ be a cuspidal automorphic representation. Fix positive integers $k \geq n+1$ and $\gamma \leq k-n$. Assume that $\Sym^{m} (\pi)$ is automorphic for $k-n-\gamma+1 \leq m \leq k$, and cuspidal for $k-n-\gamma + 1 \leq m \leq k-\gamma$. Further assume that the exterior power lift $\Lambda^{j}(\pi)$ is automorphic for all $2 \leq j \leq n-1$, and that the Rankin--Selberg products of its dual with every summand of $\Sym^{\ell}(\pi)$ are automorphic for all $k-\gamma+1 \leq \ell \leq k$. Then,
\begin{equation*}
    \mathcal{N}(\Sym^{k}(\pi)) \leq \floor{ \binom{n+k-1}{n-1} \bigg/ \delta_{n,k}(\gamma) } ,
\end{equation*}
where $\delta_{n,k}(\gamma)$ is defined in~\eqref{eqn:delta_nk_gamma_def}.
\end{thm}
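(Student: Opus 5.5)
We prove the theorem by iterating the argument of Theorem~\ref{thm:Main_Theorem} $\gamma$ times, descending one symmetric power at a time until we reach the range where cuspidality is assumed. At each step we pick up a factor of $n$ in the degree, and this is the source of the $n^{\gamma-1}$ in $\delta_{n,k}(\gamma)$.

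Suppose $\Sym^{k}(\pi)$ is not cuspidal, and let $\tau_{0}=\tau$ be a cuspidal summand of degree $r$. We will show that $r\ge\delta_{n,k}(\gamma)$. Since the degrees of the summands add up to $\binom{n+k-1}{n-1}$, the bound on $\mathcal{N}(\Sym^{k}(\pi))$ then follows immediately. The key claim, to be proved by induction on $j=0,1,\dots,\gamma-1$, is the following: there is a cuspidal $\tau_{j}\prec\Sym^{k-j}(\pi)$ of degree $r_{j}\le n^{j}r$.

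For $j=0$ this holds with $\tau_{0}=\tau$. For the inductive step, let $\tau_{j}\prec\Sym^{k-j}(\pi)$ with $j\le\gamma-2$, and apply Lemma~\ref{lem:Main_Lemma} with $k$ replaced by $k-j$. Here we need $k-j\ge n$, which holds since $\gamma\le k-n$. We then twist every factor by $\widetilde{\tau_j}$ and take $-\ord_{s=1}$.

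The left side contributes at least $1$ from $\Sym^{k-j}(\pi)\times\widetilde{\tau_j}$. The remaining left-hand terms $\Sym^{k-j-2i}(\pi)\times\Lambda^{2i}(\pi)\boxtimes\widetilde{\tau_j}$ have nonnegative $-\ord$. This uses the assumed automorphy of the Rankin--Selberg products with summands of $\Sym^{\ell}(\pi)$ for $\ell=k-j\in[k-\gamma+1,k]$, together with nonvanishing at $s=1$. Also $\Lambda^{n}(\pi)=\omega_\pi$ is trivially fine.

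Hence some term $\Sym^{k-j-1-2i}(\pi)\times\Lambda^{2i+1}(\pi)\boxtimes\widetilde{\tau_j}$ on the right has a pole at $s=1$. We would like this to be the term with $i=0$, namely $\Sym^{k-j-1}(\pi)\times\pi\times\widetilde{\tau_j}$. A pole there means that some cuspidal summand $\tau_{j+1}$ of $\Sym^{k-j-1}(\pi)$, which is automorphic but not assumed cuspidal, satisfies $\tau_{j+1}\prec\widetilde{\pi}\times\tau_j$, or an analogous containment. This gives $r_{j+1}\le n r_j$.

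For $i\ge1$ the same argument gives a summand of $\Sym^{k-j-1-2i}(\pi)$ of degree at most $\binom{n}{2i+1}r_j$. This is the main obstacle: the inductive statement has to be made robust to which index $i$ produces the pole.

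The way around this is to note that the final estimate only needs a cuspidal summand of some $\Sym^{m}(\pi)$ with $m\le k-\gamma$, so that cuspidality applies. So we generalise the induction to: there is a cuspidal $\tau'\prec\Sym^{m}(\pi)$ of degree at most $n^{k-m}r$, possibly with a better constant, for some $m\ge k-\gamma+1$. Each step down by $1+2i$ costs a factor $\binom{n}{2i+1}\le n^{2i+1}$, so the invariant ``degree $\le n^{k-m}r$'' is preserved. We continue while $m\ge k-\gamma+1$. Once we are at some $m\in[k-\gamma+1,\,k-\gamma+1]$, or more generally at the first level where the next step lands at or below $k-\gamma$, we perform the final step exactly as in Theorem~\ref{thm:Main_Theorem}. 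There the target $\Sym^{m-1-2i}(\pi)$ has index in $[k-n-\gamma+1,\,k-\gamma]$ and is therefore cuspidal, automorphic by hypothesis. So it is contained entirely in $\widetilde{\Lambda^{2i+1}(\pi)}\times\tau'$, which gives
\[
\binom{n+m-2-2i}{n-1}\le\binom{n}{2i+1}\,n^{k-m}r .
\]

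To finish, we must check that this inequality is worst, meaning it gives the smallest lower bound on $r$, in the configuration $m=k-\gamma+1$. Using the monotonicity of $\binom{n+x}{n-1}$ in $x$, the cost of overshooting below $k-\gamma$ is compensated by the $n$-power factor, and this yields $r\ge n^{-(\gamma-1)}\min_i\lceil\binom{n+k-\gamma-1-2i}{n-1}/\binom{n}{2i+1}\rceil$. Since $r$ is an integer, taking the ceiling gives $r\ge\delta_{n,k}(\gamma)$. This bookkeeping of which level the chain exits at is the delicate step. If it proves too lossy, we fall back to the clean version in which the descent steps are of length one.
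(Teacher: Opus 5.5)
Your chain argument is correct, but it is organized differently from the paper's. The paper runs a bottom-up induction on $\alpha=1,\dots,\gamma$. It proves that \emph{every} cuspidal summand of $\Sym^{k-\gamma+\alpha}(\pi)$ has degree at least $\delta_{n,k-\gamma+\alpha}(\alpha)=\ceil{M/n^{\alpha-1}}$, where $M=\min_i\ceil{x_i}$ and $x_i=\binom{n+k-\gamma-1-2i}{n-1}/\binom{n}{2i+1}$. At each stage it splits into three cases according to where the pole lands: the adjacent level, a lower level still covered by the inductive hypothesis, or the cuspidal range $\le k-\gamma$.

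You instead fix one summand $\tau$ of $\Sym^{k}(\pi)$ and follow a single descending chain with variable step lengths $1+2i$. You carry the invariant $\deg\le n^{k-m}r$ using $\binom{n}{2i+1}\le n^{2i+1}$. This absorbs the paper's Cases I and II into one line and leaves only the exit estimate. That estimate is exactly the paper's Case III: exiting from level $k-\gamma+1+a$ corresponds to $\beta=a$. The hypotheses are used at the same levels in both arguments. The paper's route also yields lower bounds for the summands of every intermediate $\Sym^{\ell}(\pi)$; yours is leaner for the statement itself. The ``fallback'' to steps of length one is not available, since you cannot choose which term carries the pole, but it is not needed.

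The step you call delicate does close, and you should write it out; the paper also only asserts it in Case III. Suppose the chain exits from level $m=k-\gamma+1+a$, with $0\le a\le\gamma-1$, through an index $i$ with $2i\ge a$. Put $t=\floor{a/2}$ and $i''=i-t\in[0,\floor{\frac{n-1}{2}}]$.
\begin{itemize}
\item Since $2i-a\le 2i''$, monotonicity in the top index gives $\binom{n+k-\gamma-1-(2i-a)}{n-1}\ge\binom{n+k-\gamma-1-2i''}{n-1}$.
\item Since $\binom{n}{b+1}\le n\binom{n}{b}$, applied $2t$ times, $\binom{n}{2i+1}\le n^{2t}\binom{n}{2i''+1}\le n^{a}\binom{n}{2i''+1}$.
\end{itemize}
Your exit inequality $\binom{n+m-2-2i}{n-1}\le\binom{n}{2i+1}n^{\gamma-1-a}r$ therefore gives $r\ge x_{i''}/n^{\gamma-1}$. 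Since $r$ is an integer and $\ceil{\ceil{x}/N}=\ceil{x/N}$, this gives $r\ge\ceil{\ceil{x_{i''}}/n^{\gamma-1}}\ge\delta_{n,k}(\gamma)$.
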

\begin{proof}
The idea is to follow the proof of Theorem~\ref{thm:Main_Theorem} to obtain the lower bound on the degree of every isobaric summand of $\Sym^{k-\gamma+\alpha}(\pi)$, where $1 \leq \alpha \leq \gamma$, inductively.
In particular, we will show that the degree $r_{\alpha}$ of every isobaric summand in $\Sym^{k-\gamma+\alpha}(\pi)$ is bounded below by $\delta_{n,k-\gamma+\alpha}(\alpha)$.

For the base case $\alpha = 1$, assume that $\Sym^{k-\gamma+1}(\pi)$ is not cuspidal. Then it has a cuspidal summand $\tau_{1}$ for some $\GL(r_{1})$. The proof of Theorem~\ref{thm:Main_Theorem} yields
\begin{equation*}
    r_{1} \geq \delta_{n,k-\gamma+1}(1) = \min_{0 \leq i \leq \floor{\frac{n-1}{2}}} \ceil{\frac{\binom{n+k-\gamma-1-2i}{n-1}}{\binom{n}{2i+1}}} .
\end{equation*}

Suppose that the degree $r_{\alpha}$ of every isobaric summand in $\Sym^{k-\gamma+\alpha}(\pi)$ satisfies $r_{\alpha} \geq \delta_{n,k-\gamma+\alpha}(\alpha)$ for all $1 \leq \alpha \leq \beta$, where $\beta < \gamma$ is a positive integer. We proceed by induction on $\alpha$.

We now show that the statement holds for $\alpha = \beta + 1$. Assume that $\Sym^{k-\gamma+\beta+1}(\pi)$ is not cuspidal. Then, it has a cuspidal summand $\tau_{\beta+1}$ for some $\GL(r_{\beta+1})$. 
Then,
\begin{equation*}
    -\ord_{s=1} L^{T}(s,\Sym^{k-\gamma+\beta+1}(\pi) \times \widetilde{\tau_{\beta+1}}) \geq 1 .
\end{equation*}
The automorphy assumptions on the Rankin--Selberg products of $\Lambda^{j}(\pi)$ with the dual of any summand of $\Sym^{k-\gamma+\beta+1}(\pi)$ for any $2 \leq j \leq n-1$ imply that
\begin{align*}
    -\ord_{s=1} L^{T}(s,\Sym^{k-\gamma+\beta+1-2i}(\pi) \times \Lambda^{2i}(\pi) \boxtimes \widetilde{\tau_{\beta+1}}) \geq 0 ,
\end{align*}
owing to non-vanishing of Rankin--Selberg $L$-functions at $s=1$.
It follows from Lemma~\ref{lem:Main_Lemma}, which applies under our automorphy assumptions, that
\begin{align}\label{eqn:sum_L_functions_general}
    \sum_{i=0}^{\floor{\frac{n-1}{2}}} -\ord_{s=1} L^{T}(s,\Sym^{k-\gamma+\beta-2i}(\pi) \times \Lambda^{2i+1}(\pi) \boxtimes \widetilde{\tau_{\beta+1}}) \geq 1 .
\end{align}
Hence, at least one of the $L$-functions in~\eqref{eqn:sum_L_functions_general} must have a pole at $s=1$. We consider three separate cases according to the possible ranges of $i$.

\underline{Case I:}
Suppose that 
\begin{align*}
    -\ord_{s=1} L^{T}(s,\Sym^{k-\gamma+\beta-2i}(\pi) \times \Lambda^{2i+1}(\pi) \boxtimes \widetilde{\tau_{\beta+1}}) \geq 1
\end{align*}
for $i=0$. This implies that there exists an isobaric summand $\tau_{\beta}$ in $\Sym^{k-\gamma+\beta}(\pi)$ such that
\begin{align*}
    \tau_{\beta} \prec \widetilde{\pi} \boxtimes \tau_{\beta+1} .
\end{align*}
Since $\tau_{\beta}$ has degree at least $\delta_{n,k-\gamma+\beta}(\beta)$ by the inductive hypothesis, we have the inequality
\begin{align*}
    \delta_{n,k-\gamma+\beta}(\beta) \leq n r_{\beta+1},
\end{align*}
which gives
\begin{align*}
    r_{\beta+1} \geq \ceil{\frac{1}{n} \delta_{n,k-\gamma+\beta}(\beta)} = \delta_{n,k-\gamma+\beta+1}(\beta+1) . 
\end{align*}
To justify the final equality, we observe that for any $a \in \RR$ and for any $m \in \NN$,
\begin{align}\label{eqn:elementary_ceiling_equality}
    \ceil{\frac{\ceil{a}}{m}} = \ceil{\frac{a}{m}} .
\end{align}
If we set
\begin{align*}
    a = \frac{1}{n^{\beta}} \min_{0 \leq i \leq \floor{\frac{n-1}{2}}} \ceil{\binom{n+k-\gamma-1-2i}{n-1} \bigg/ \binom{n}{2i+1}} ,
\end{align*}
then
\begin{align*}
    \ceil{\frac{1}{n} \delta_{n,k-\gamma+\beta}(\beta)} = \ceil{\frac{\ceil{a}}{n}} \qquad \text{ and } \qquad \delta_{n,k-\gamma+\beta+1}(\beta+1) = \ceil{\frac{a}{n}} .
\end{align*}
The equality of these two expressions follows from~\eqref{eqn:elementary_ceiling_equality}.

\underline{Case II:}
Suppose that
\begin{align*}
    -\ord_{s=1} L^{T}(s,\Sym^{k-\gamma+\beta-2i}(\pi) \times \Lambda^{2i+1}(\pi) \boxtimes \widetilde{\tau_{\beta+1}}) \geq 1
\end{align*}
for some $1 \leq i \leq \floor{\frac{\beta-1}{2}}$. This implies that there exists an isobaric summand $\tau_{\beta-2i}$ in $\Sym^{k-\gamma+\beta-2i}(\pi)$ such that
\begin{align*}
    \tau_{\beta-2i} \prec \widetilde{\Lambda^{2i+1}(\pi)} \boxtimes \tau_{\beta+1} .
\end{align*}
Since $\tau_{\beta-2i}$ has degree at least $\delta_{n,k-\gamma+\beta-2i}(\beta-2i)$ by the inductive hypothesis, we have the inequality
\begin{align*}
    \delta_{n,k-\gamma+\beta-2i}(\beta-2i) \leq \binom{n}{2i+1} r_{\beta+1}.
\end{align*}
Therefore,
\begin{align*}
    r_{\beta+1} \geq \min_{1 \leq j \leq \floor{\frac{\beta-1}{2}}} \ceil{\frac{\delta_{n,k-\gamma+\beta-2j}(\beta-2j)}{\binom{n}{2j+1}}} \geq \delta_{n,k-\gamma+\beta+1}(\beta+1). 
\end{align*}
To verify the final inequality, we set
\begin{align*}
    b = \frac{1}{n^{\beta-2j-1}} \min_{0 \leq i \leq \floor{\frac{n-1}{2}}} \ceil{\binom{n+k-\gamma-1-2i}{n-1} \bigg/ \binom{n}{2i+1}} .
\end{align*}
Then, we have
\begin{align*}
    \ceil{\frac{\delta_{n,k-\gamma+\beta-2j}(\beta-2j)}{\binom{n}{2j+1}}} = \ceil{\frac{\ceil{b}}{\binom{n}{2j+1}}} \qquad \text{ and } \qquad \delta_{n,k-\gamma+\beta+1}(\beta+1) = \ceil{\frac{b}{n^{2j+1}}} .
\end{align*}
Since $\binom{n}{2j+1} \leq n^{2j+1}$, we conclude from~\eqref{eqn:elementary_ceiling_equality} that
\begin{align*}
    \ceil{\frac{\delta_{n,k-\gamma+\beta-2j}(\beta-2j)}{\binom{n}{2j+1}}} = \ceil{\frac{\ceil{b}}{\binom{n}{2j+1}}} \geq \ceil{\frac{\ceil{b}}{n^{2j+1}}} = \ceil{\frac{b}{n^{2j+1}}} = \delta_{n,k-\gamma+\beta+1}(\beta+1) .
\end{align*}

\underline{Case III:}
Suppose that
\begin{align*}
    -\ord_{s=1} L^{T}(s,\Sym^{k-\gamma+\beta-2i}(\pi) \times \Lambda^{2i+1}(\pi) \boxtimes \widetilde{\tau_{\beta+1}}) \geq 1
\end{align*}
for some $\floor{\frac{\beta-1}{2}} < i \leq \floor{\frac{n-1}{2}}$. We know that $\Sym^{k-\gamma+\beta-2i}(\pi)$ is cuspidal in this case. Then
\begin{align*}
    \Sym^{k-\gamma+\beta-2i}(\pi) \prec \widetilde{\Lambda^{2i+1}(\pi)} \boxtimes \tau_{\beta+1} 
\end{align*}
and hence
\begin{align*}
    \binom{n+k-\gamma+\beta-1-2i}{n-1} \leq \binom{n}{2i+1} r_{\beta+1} .
\end{align*}
Therefore, 
\begin{align*}
    r_{\beta+1} \geq \min_{\floor{\frac{\beta-1}{2}} < i \leq \floor{\frac{n-1}{2}}} \ceil{\frac{\binom{n+k-\gamma+\beta-1-2i}{n-1}}{\binom{n}{2i+1}}} \geq \delta_{n,k-\gamma+\beta+1}(\beta+1) . 
\end{align*}

In all three cases, we have shown that $r_{\beta+1} \geq \delta_{n,k-\gamma+\beta+1}(\beta+1)$. Thus, by induction, the degree of every isobaric cuspidal summand of $\Sym^{k-\gamma+\alpha}(\pi)$ is at least $\delta_{n,k-\gamma+\alpha}(\alpha)$ for all $1 \leq \alpha \leq \gamma$. In particular, the degree of every isobaric cuspidal summand of $\Sym^{k}(\pi)$ is at least $\delta_{n,k}(\gamma)$.
\end{proof}

\begin{remark}
Let $k \geq n^{2+\varepsilon}$ for some $\varepsilon > 0$. The bound in Theorem~\ref{thm:Main_Theorem_var1} is trivial whenever $\gamma \geq \frac{n \log k}{\log n}$.
It suffices to prove that if $\gamma \geq \frac{n \log k}{\log n}$, then
\begin{align}\label{eqn:range_of_gamma_for_trivial}
    \frac{1}{n^{\gamma-1}}\ceil{\binom{n+k-\gamma-1-2\floor{\frac{n}{4}}}{n-1} \bigg/ \binom{n}{2\floor{\frac{n}{4}}+1}} \leq 1,
\end{align}
which implies that $\delta_{n,k}(\gamma) = 1$.

Applying Stirling's formula (see, e.g.~\cite{Robbins_on_Stirling_formula_1955}), we obtain
\begin{align*}
    &\log \binom{n+k-\gamma-1-2\floor{\frac{n}{4}}}{n-1} \\
    \leq& (n-1) \log (k+\frac{n}{2} - 1) - \log ((n-1)!) \\
    \leq& (n-1) \log k + (n-1) \log \left( 1+\frac{n-2}{2k} \right) - (n-1) \log (n-1) + n-1 - \frac{1}{2}\log(2\pi (n-1)) \\
    \leq& (n-1) \log k - (n-1) \log (n-1) + n-1 - \frac{1}{2}\log(n-1) .
\end{align*}
The last inequality holds because $(n-1) \log \left( 1+\frac{n-2}{2k} \right) \leq \frac{(n-1)(n-2)}{2k} \leq \frac{1}{2} \leq \frac{1}{2}\log(2\pi)$ when $k \geq n^{2+\varepsilon}$.

Next, we claim that for all $n \geq 2$, 
\begin{align*}
    \binom{n}{2\floor{\frac{n}{4}}+1} > \frac{2^{n-1}}{n+1} .
\end{align*}
If $n \equiv 1,2,3 \pmod{4}$, then
\begin{align*}
    \binom{n}{2\floor{\frac{n}{4}}+1} = \binom{n}{\floor{\frac{n}{2}}} \geq \binom{2 \floor{\frac{n}{2}}}{\floor{\frac{n}{2}}} > \frac{4^{\floor{\frac{n}{2}}}}{2\floor{\frac{n}{2}}+1} \geq \frac{4^{\frac{n-1}{2}}}{2\frac{n}{2}+1} = \frac{2^{n-1}}{n+1} ,
\end{align*}
by the estimate for the central binomial coefficient $\binom{2r}{r} > \frac{4^{r}}{2r+1}$~\cite{Wei_central_binomial_2022}.
On the other hand, for $n \equiv 0 \pmod{4}$, we have
\begin{align*}
    \binom{n}{2\floor{\frac{n}{4}}+1} = \binom{n}{\frac{n}{2}} \cdot \frac{\frac{n}{2}}{\frac{n}{2} + 1} > \frac{2^{n}}{n+1} \cdot \frac{n}{n+2} > \frac{2^{n-1}}{n+1} .
\end{align*}

We claim that 
\begin{align*}
    &\log \binom{n+k-\gamma-1-2\floor{\frac{n}{4}}}{n-1} - \log \binom{n}{2\floor{\frac{n}{4}}+1} \leq (n-1) \log k + \log n .
\end{align*}
For $n=2$, this is trivial.
For $n \geq 3$, we combine the estimates above to obtain
\begin{align*}
    &\log \binom{n+k-\gamma-1-2\floor{\frac{n}{4}}}{n-1} - \log \binom{n}{2\floor{\frac{n}{4}}+1} \\
    \leq & (n-1) \log k  - (n-1) \log (n-1) + n-1 - \frac{1}{2}\log (n-1) - (n-1)\log 2 + \log (n+1) .
\end{align*}
The claim now follows from the observation that 
\begin{align*}
    - (n-1) \log (n-1) + n-1 - \frac{1}{2}\log (n-1) - (n-1)\log 2 + \log (n+1) - \log n \leq 0
\end{align*}
for all $n \geq 3$. Indeed, the derivative of the left-hand side is negative for all $n \geq 2$, so the function is decreasing. Evaluating at $n=3$ verifies the claim.

Finally, if $\gamma \geq \frac{n \log k}{\log n}$, then 
\begin{align*}
    &\log \binom{n+k-\gamma-1-2\floor{\frac{n}{4}}}{n-1} - \log \binom{n}{2\floor{\frac{n}{4}}+1} - (\gamma - 1) \log n \\
    \leq & (n-1) \log k + \log n - \left(\frac{n \log k}{\log n} -1 \right) \log n \\
    \leq &-\log k + 2 \log n < 0 ,
\end{align*}
since $k \geq n^{2+\varepsilon}$ by assumption. It follows that~\eqref{eqn:range_of_gamma_for_trivial} holds, and consequently the bound in Theorem~\ref{thm:Main_Theorem_var1} becomes trivial.
\end{remark}

\begin{remark}
Let $k \geq n^{2+\varepsilon}$ for some $\varepsilon > 0$ and $\gamma < n \frac{\log k}{\log n}$.
We show, in a similar fashion to Remark~\ref{rmk:min_location_for_main_thm}, that the minimum of
\begin{align}\label{eqn:expression_of_delta_nkgamma}
    \ceil{\frac{\binom{n+k-\gamma-1-2i}{n-1}}{\binom{n}{2i+1}}}
\end{align}
over $0 \le i \le \floor{\frac{n-1}{2}}$, appearing in the definition of $\delta_{n,k}(\gamma)$ in~\eqref{eqn:delta_nk_gamma_def}, is attained at $i=\floor{\frac{n}{4}}$.
The numerator decreases in $i$, whereas the denominator increases for $0 \leq i \leq \floor{\frac{n}{4}}$ and decreases thereafter. Hence, it suffices to consider only the range $\floor{\frac{n}{4}} \leq i \leq \floor{\frac{n-1}{2}}$.

For $n=2,4$, we have $\floor{\frac{n}{4}} = \floor{\frac{n-1}{2}}$. The claim is immediate. 
For $n = 3$ or $n \geq 5$, consider the ratio 
\begin{align}\label{eqn:ratio_for_remark_3.3.1}
\begin{aligned}
    &\frac{\binom{n+k-\gamma-1-2(i+1)}{n-1}}{\binom{n}{2(i+1)+1}} \bigg/ \frac{\binom{n+k-\gamma-1-2i}{n-1}}{\binom{n}{2i+1}} \\
    =& \frac{(k-\gamma-2i)(k-\gamma-1-2i)}{(n+k-\gamma-1-2i)(n+k-\gamma-2-2i)} \cdot \frac{(2i+3)(2i+2)}{(n-2i-1)(n-2i-2)} 
\end{aligned}
\end{align}
where $\floor{\frac{n}{4}} \leq i \leq \floor{\frac{n-1}{2}}-1$.
The first factor $\frac{(k-\gamma-2i)(k-\gamma-1-2i)}{(n+k-\gamma-1-2i)(n+k-\gamma-2-2i)}$ is a decreasing function of $i$ and satisfies
\begin{align*}
    \frac{(k-\gamma-2i)(k-\gamma-1-2i)}{(n+k-\gamma-1-2i)(n+k-\gamma-2-2i)} 
    \geq \frac{(k-n-\gamma+3)(k-n-\gamma+2)}{(k-\gamma+2)(k-\gamma+1)} 
    > \frac{(k-n-\gamma+2)^{2}}{(k-\gamma+1)^{2}}.
\end{align*}
Recall the bound for $\frac{(2i+3)(2i+2)}{(n-2i-1)(n-2i-2)}$ in~\eqref{eqn:binomial_ratio_bounds}. It suffices to show that the ratio in~\eqref{eqn:ratio_for_remark_3.3.1} is greater than 1. We now show that
\begin{align*}
    \left(\frac{k-n-\gamma+2}{k-\gamma+1}\right) \left( \frac{n+2}{n-2} \right)> 1 ,
\end{align*}
or equivalently that $4(k-\gamma)> n^2+n-6$ by rearranging terms.
Write $k = n^{r}$ with $r \geq 2+\varepsilon$ for some $\varepsilon > 0$. Then $\gamma < n \frac{\log k}{\log n} = rn$. It therefore suffices to show that
\begin{align*}
    4(k-\gamma) - n^2 - n + 6 > 4(n^{r} - rn) - n^2 - n + 6 = 4n^{r} - n^{2} - (4r+1)n + 6
\end{align*}
is positive. Differentiating the right-hand side with respect to $r$ gives $4n^{r} \log n - 4n$, which shows that the function is increasing in $r$. Hence, for $r \geq 2+\varepsilon > 2$ and $n \geq 3$, we have 
\begin{align*}
    4n^{r} - n^{2} - (4r+1)n + 6 > 3n^{2} - 9n + 6 = 3(n-1)(n-2) > 0,
\end{align*}
proving the claim.
\end{remark}

\begin{remark}
Let $k \geq 2n^{2+\varepsilon}$ and $\gamma < \frac{n \log k}{\log n}$ for some $\varepsilon > 0$. 
Similar to Remark~\ref{rmk:asymptotics_for_main_thm}, we study the asymptotics of the bound
\begin{equation*}
    \mathcal{N}(\Sym^{k}(\pi)) \leq \floor{ \binom{n+k-1}{n-1} \bigg/ \frac{1}{n^{\gamma-1}} \ceil{\frac{\binom{n+k-\gamma-1-2\floor{\frac{n}{4}}}{n-1}}{\binom{n}{2\floor{\frac{n}{4}}+1}}} } .
\end{equation*}
It suffices to analyze 
\begin{equation*}
    n^{\gamma-1} \cdot \binom{n+k-1}{n-1} \bigg/ \ceil{\frac{\binom{n+k-\gamma-1-2\floor{\frac{n}{4}}}{n-1}}{\binom{n}{2\floor{\frac{n}{4}}+1}}} .
\end{equation*}
We first consider the ratio
\begin{align*}
    \frac{\binom{n+1+k-\gamma-1-2\floor{\frac{n+1}{4}}}{n+1-1}}{\binom{n+1}{2\floor{\frac{n+1}{4}}+1}} \bigg/ \frac{\binom{n+k-\gamma-1-2\floor{\frac{n}{4}}}{n-1}}{\binom{n}{2\floor{\frac{n}{4}}+1}} = \frac{\binom{n+k-\gamma-2\floor{\frac{n+1}{4}}}{n}}{\binom{n+k-\gamma-1-2\floor{\frac{n}{4}}}{n-1}} \cdot \frac{\binom{n}{2\floor{\frac{n}{4}}+1}}{\binom{n+1}{2\floor{\frac{n+1}{4}}+1}} .
\end{align*}
We write $k = n^{r}$ for some $r \geq 2 + \varepsilon$ so that $\gamma \leq n \frac{\log k}{\log n} = rn$. For the first factor, we see that
\begin{align*}
    \frac{\binom{n+k-\gamma-2\floor{\frac{n+1}{4}}}{n}}{\binom{n+k-\gamma-1-2\floor{\frac{n}{4}}}{n-1}} 
    &\geq \frac{\binom{n+k-\gamma-2\floor{\frac{n}{4}}-2}{n}}{\binom{n+k-\gamma-1-2\floor{\frac{n}{4}}}{n-1}}
    = \frac{(k-2\floor{\frac{n}{4}}-\gamma)(k-2\floor{\frac{n}{4}}-\gamma-1)}{n(k+n-2\floor{\frac{n}{4}}-\gamma-1)} \\
    &\geq \frac{(k-(r+\frac{1}{2})n)(k-(r+\frac{1}{2})n-1)}{n(k+\frac{n}{2}-\frac{5}{2})} .
\end{align*}
Applying~\eqref{eqn:second_factor_analysis_012} and~\eqref{eqn:second_factor_analysis_3} to the second factor, we see that the ratio 
\begin{align*}
    \frac{\binom{n+1+k-\gamma-1-2\floor{\frac{n+1}{4}}}{n+1-1}}{\binom{n+1}{2\floor{\frac{n+1}{4}}+1}} \bigg/ \frac{\binom{n+k-\gamma-1-2\floor{\frac{n}{4}}}{n-1}}{\binom{n}{2\floor{\frac{n}{4}}+1}} 
    \geq \frac{(k-(r+\frac{1}{2})n)(k-(r+\frac{1}{2})n-1)}{n(k+\frac{n}{2}-\frac{5}{2})} \cdot \frac{n-2\floor{\frac{n}{4}}}{n+1} \gg \frac{k}{n} \to \infty
\end{align*}
as $n \to \infty$, since $k \geq n^{2+\varepsilon}$.

We have shown that 
\begin{align*}
    \frac{\binom{n+k-\gamma-1-2\floor{\frac{n}{4}}}{n-1}}{\binom{n}{2\floor{\frac{n}{4}}+1}} \to \infty.
\end{align*}
Note that if $g(x) \to \infty$ as $x \to \infty$, then $\frac{1}{\ceil{g(x)}} \sim \frac{1}{g(x)}$.
Hence,  
\begin{align*}
    \binom{n+k-1}{n-1} \bigg/ \ceil{\frac{\binom{n+k-\gamma-1-2\floor{\frac{n}{4}}}{n-1}}{\binom{n}{2\floor{\frac{n}{4}}+1}}} \sim \frac{\binom{n+k-1}{n-1}}{\binom{n+k-\gamma-1-2\floor{\frac{n}{4}}}{n-1}} \cdot \binom{n}{2\floor{\frac{n}{4}}+1} .
\end{align*}
Recall that $k = n^{r}$ and $\gamma \leq rn$. Observe that
\begin{align*}
    \frac{\binom{n+k-1}{n-1}}{\binom{n+k-\gamma-1-2\floor{\frac{n}{4}}}{n-1}}
    = \prod_{j=0}^{n-2} \left( 1+ \frac{\gamma + 2\floor{\frac{n}{4}}}{n+k-\gamma-1-2\floor{\frac{n}{4}}-j}\right)
    \leq \left( 1+ \frac{(r+\frac{1}{2})n}{n^{r}-(r-\frac{1}{2})n}\right)^{n-1}.
\end{align*}
Since $r - 1 \geq 1+\varepsilon$ for some $\varepsilon >0$, the right-hand side tends to $1$ as $n \to \infty$. Therefore,
\begin{align*}
    \binom{n+k-1}{n-1} \bigg/ \ceil{\frac{\binom{n+k-\gamma-1-2\floor{\frac{n}{4}}}{n-1}}{\binom{n}{2\floor{\frac{n}{4}}+1}}} \sim \binom{n}{2\floor{\frac{n}{4}}+1} \sim \frac{2^{n+\frac{1}{2}}}{\sqrt{\pi n}} ,
\end{align*}
which is independent of the specific value of $k$. We refer the reader to Remark~\ref{rmk:asymptotics_for_main_thm} for the last asymptotic equivalence.
\end{remark}

\section{Bounding isobaric summands of $\Sym^{2}(\pi)$}
Walji~\cite[Proposition 3.7]{Walji_conjectural_decomposition_2022} proves an upper bound for $\mathcal{N}(\Sym^{2}(\pi))$, where $\pi \in \mathcal{A}_{0}(\GL_{3}(\mathbb{A}_{F}))$. 
We generalize this argument to all $\pi \in \mathcal{A}_{0}(\GL_{p^{n}}(\mathbb{A}_{F}))$, where $p$ is a prime and $n$ is a positive integer.
\begin{thm}\label{thm:bounding_iso_sum_for_sym2}
Let $p$ be a prime and $n$ be a positive integer. Let $\pi \in \mathcal{A}_{0}(\GL_{p^{n}}(\mathbb{A}_{F}))$. Suppose that $\Sym^{2}(\pi)$, $\Lambda^{2}(\pi)$ and $\pi \times \widetilde{\pi}$ are automorphic, then exactly one of the following holds:
\begin{enumerate}[(a)]
    \item Both $\Sym^{2}(\pi)$ and $\Lambda^{2}(\pi)$ are isobaric sums consisting solely of Hecke characters; or
    \item 
    \begin{align*}
        \mathcal{N}(\Sym^{2}(\pi)) \leq \floor{\frac{p^{2n}+2p^{2n-1}+p^{n}}{4}} .
    \end{align*}
\end{enumerate}
\end{thm}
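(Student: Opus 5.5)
The plan is to control the Hecke characters occurring in $\Sym^{2}(\pi)$ via the self-twist group of $\pi$, and then count. Write $N = p^{n}$ and use the Clebsch--Gordan decomposition $\pi \boxtimes \pi = \Sym^{2}(\pi) \boxplus \Lambda^{2}(\pi)$, which holds at the level of $L$-functions. Since $\Sym^{2}(\pi)$ and $\Lambda^{2}(\pi)$ are assumed automorphic, we may also regard $\pi \boxtimes \pi$ as automorphic. Let
\begin{equation*}
X(\pi) = \{\chi : \pi \otimes \chi \simeq \pi\}
\end{equation*}
be the group of self-twists of $\pi$.

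\emph{Step 1: the characters in $\pi \boxtimes \pi$.} A Hecke character $\chi$ satisfies $\chi \prec \pi \boxtimes \pi$ if and only if $L^{T}(s, \pi \times (\pi \otimes \chi^{-1}))$ has a pole at $s=1$. By Jacquet--Shalika this happens if and only if $\pi \otimes \chi^{-1} \simeq \widetilde{\pi}$, and then the pole is simple, so $\chi$ occurs with multiplicity one. If $\chi$ and $\chi'$ both occur, then $\pi \otimes \chi^{-1} \simeq \pi \otimes \chi'^{-1}$, so $\chi\chi'^{-1} \in X(\pi)$. Hence the characters occurring in $\pi \boxtimes \pi$, and in particular those in $\Sym^{2}(\pi)$, form either the empty set or a single coset of $X(\pi)$. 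By the same argument, the characters in $\pi \boxtimes \widetilde{\pi}$ are exactly the elements of $X(\pi)$, each with multiplicity one, so $|X(\pi)| \leq N^{2}$.

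\emph{Step 2: the dichotomy.} If $|X(\pi)| = N^{2}$, then $\pi \boxtimes \widetilde{\pi}$ is a sum of $N^{2}$ characters. Twisting by some $\chi$ with $\widetilde{\pi} \simeq \pi \otimes \chi^{-1}$ (such a $\chi$ exists once some character occurs, and occurrence is forced by the degree count) shows that $\pi \boxtimes \pi$ is also a sum of $N^{2}$ characters. Then both $\Sym^{2}(\pi)$ and $\Lambda^{2}(\pi)$ consist solely of Hecke characters, which is case (a).

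Otherwise I want $|X(\pi)| \leq N^{2}/p$, which follows if $|X(\pi)|$ divides $N^{2} = p^{2n}$. This divisibility is the step I expect to be the main obstacle. My first attempt is to let the finite abelian group $X(\pi)$ act by twisting on the cuspidal summands of $\pi \boxtimes \widetilde{\pi}$ (it preserves this isobaric representation). Comparing degrees orbit by orbit, together with the fact that the characters form one regular orbit, should force $|X(\pi)| \mid N^{2}$. If that fails, I would instead invoke the standard structure result that a cuspidal $\pi$ with self-twist group $X$ is automorphically induced in a way that makes $|X|$ divide $N^{2}$.

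\emph{Step 3: counting.} In case (b), let $c$ be the number of characters in $\Sym^{2}(\pi)$, so $c \leq |X(\pi)| \leq p^{2n-1}$. Every other cuspidal summand has degree at least $2$, and $\deg \Sym^{2}(\pi) = N(N+1)/2$. Therefore
\begin{equation*}
\mathcal{N}(\Sym^{2}(\pi)) \leq c + \frac{1}{2}\left(\frac{N(N+1)}{2} - c\right) = \frac{N^{2}+N}{4} + \frac{c}{2} \leq \frac{p^{2n}+2p^{2n-1}+p^{n}}{4},
\end{equation*}
and taking the floor gives (b).

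Finally, (a) and (b) are mutually exclusive. In case (a) we have $\mathcal{N}(\Sym^{2}(\pi)) = (p^{2n}+p^{n})/2$, and this exceeds the bound in (b) because $2p^{2n-1} < p^{2n}+p^{n}$.
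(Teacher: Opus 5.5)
There is a genuine gap in Step~2: you never prove that $|X(\pi)|$ is a power of $p$, which is all you need. Since $|X(\pi)| \leq p^{2n}$, this is the same as $|X(\pi)| \mid p^{2n}$.

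The orbit argument does not give it. A non-character summand $\sigma$ of degree $d$ with stabilizer $X_{\sigma}$ contributes $d\,|X(\pi)|/|X_{\sigma}|$ to the total degree $N^{2}$. Since $|X_{\sigma}|$ can be as large as $d^{2}$, orbit bookkeeping imposes no divisibility. For instance, nothing in it rules out $|X(\pi)| = 2$ with $N = 3$ and a fixed $7$-dimensional summand. The structure-result fallback is not a proof as stated; it could be made rigorous for prime-order twists via Arthur--Clozel's characterization of automorphic induction, but that is far heavier than needed.

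The missing idea is a one-line comparison of central characters. If $\pi \otimes \mu \simeq \pi$, then $\omega_{\pi}\mu^{p^{n}} = \omega_{\pi \otimes \mu} = \omega_{\pi}$, so $\mu^{p^{n}} = 1$. Hence the finite abelian group $X(\pi)$ has exponent dividing $p^{n}$, and by Cauchy's theorem it is a $p$-group. Combined with $|X(\pi)| \leq p^{2n}$ from your Step~1, this gives either $|X(\pi)| = p^{2n}$ or $|X(\pi)| \leq p^{2n-1}$. This is exactly the paper's argument.

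There is also an error in your case $|X(\pi)| = N^{2}$. You claim that some $\chi$ with $\widetilde{\pi} \simeq \pi \otimes \chi^{-1}$ must exist by the degree count, but nothing forces this. Take $p$ odd and the $p$-dimensional irreducible representation $\rho_{p}$ of $\He_{p}$ from Section~\ref{sec:consequences}.
\begin{itemize}
\item It satisfies $\rho_{p} \otimes \chi \simeq \rho_{p}$ for all $p^{2}$ linear characters $\chi$.
\item Yet the center acts on $\rho_{p} \otimes \rho_{p}$ through the square of a nontrivial central character, which is nontrivial for $p$ odd. So $\rho_{p} \otimes \rho_{p}$ contains no linear character.
\item For $p = 3$ one gets $\Sym^{2}(\rho_{3}) \simeq \rho_{3}^{\vee} \oplus \rho_{3}^{\vee}$.
\end{itemize}
The associated cuspidal $\pi$ on $\GL(3)$ therefore has $|X(\pi)| = 9$ but lies in case (b), not (a).

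The repair is to start, as the paper does, from a Hecke character $\omega \prec \Sym^{2}(\pi)$.
\begin{itemize}
\item If there is none, your count with $c = 0$ already gives (b).
\item If there is one, the set of $\chi$ with $\widetilde{\pi} \simeq \pi \otimes \chi^{-1}$ is a nonempty coset of $X(\pi)$. Only then does $|X(\pi)| = p^{2n}$ force $\pi \boxtimes \pi \simeq (\pi \boxtimes \widetilde{\pi}) \otimes \omega$ to consist of characters, giving (a).
\end{itemize}
Apart from these two points, your Step~1, your Step~3 count, and your exclusivity check agree with the paper.
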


\begin{proof}
The idea is to bound the number of Hecke characters that are isobaric summands of $\Sym^{2}(\pi)$. Suppose there exists Hecke character $\omega$ that is an isobaric summand of $\Sym^{2}(\pi)$. Given that
\begin{align}\label{eqn:Sym2_Lambda2}
    L^{T}(s, \pi \times \pi \otimes \omega^{-1}) = L^{T}(s, \Sym^{2}(\pi) \otimes \omega^{-1}) L^{T}(s, \Lambda^{2}(\pi) \otimes \omega^{-1}),
\end{align}
the first $L$-function must have a pole at $s = 1$, and hence $\widetilde{\pi} \simeq \pi \otimes \omega^{-1}$. 

Let $S = \{ \omega \in \mathcal{A}(\GL_{1}(\mathbb{A}_{F})) : \pi \otimes \omega \simeq \widetilde{\pi} \}$. To bound $\abs{S}$, we let $G = \{ \mu \in \mathcal{A}(\GL_{1}(\mathbb{A}_{F})) : \pi \otimes \mu \simeq \pi \}$ be the multiplicative group of self-twists of $\pi$.
Since $G$ acts faithfully and transitively on the set $S$, they have the same cardinality. 
Note that if $\mu \in G$, then $\mu$ is an isobaric summand of $\pi \boxtimes \widetilde{\pi}$ and hence $\abs{G} \leq p^{2n}$. 
By comparing the central characters of $\pi \otimes \mu$ and $\pi$, we conclude that every element of $G$ has order dividing $p^{n}$. Hence, $G$ is a $p$-group, and $\abs{G} = p^{2n}$ or $\abs{G} \leq p^{2n-1}$. If $\abs{S} = \abs{G} = p^{2n}$, then this is case (a).
If $\abs{S} = \abs{G} \leq p^{2n-1}$, then~\eqref{eqn:Sym2_Lambda2} implies that the number of Hecke characters appearing as isobaric summands of $\Sym^{2}(\pi)$ is at most $p^{2n-1}$. All remaining isobaric summands of $\Sym^{2}(\pi)$ have degree at least $2$. Therefore,
\begin{align*}
    \mathcal{N}(\Sym^{2}(\pi)) \leq \floor{\frac{\frac{p^{2n}+p^{n}}{2}-p^{2n-1}}{2}} + p^{2n-1} = \floor{\frac{p^{2n}+2p^{2n-1}+p^{n}}{4}} .
\end{align*}
\end{proof}

\section{Examples}\label{sec:consequences}
\subsection{Sharpness of Theorem~\ref{thm:Main_Theorem_var1}}
In this section, we show that the bound in Theorem~\ref{thm:Main_Theorem_var1} is sharp in a specific $\GL(2)$ setting.
We begin by recalling the notion of a quasi-icosahedral representation, introduced by Ramakrishnan~\cite[p.2]{Ramakrishnan_sym_power_cusp_form_2009}.
\begin{definition}
Let $\pi \in \mathcal{A}_{0}(\GL_{2}(\mathbb{A}_{F}))$ be a cuspidal representation. We say that $\pi$ is quasi-icosahedral if 
\begin{enumerate}[(i)]
    \item $\Sym^{m}(\pi)$ is automorphic for all $m \leq 6$,
    \item $\Sym^{m}(\pi)$ is cuspidal for all $m \leq 4$, and
    \item $\Sym^{6}(\pi)$ is not cuspidal.
\end{enumerate}
\end{definition}

We can now apply Theorem~\ref{thm:Main_Theorem_var1} to study the decomposition of $\Sym^{7}(\pi)$, where $\pi$ is a quasi-icosahedral representation.
\begin{cor}
Let $\pi \in \mathcal{A}_{0}(\GL_{2}(\mathbb{A}_{F}))$ be quasi-icosahedral. Assume that $\Sym^{7} (\pi)$ is automorphic. If $\Sym^{7} (\pi)$ has a cuspidal isobaric summand $\eta$ for some $\GL(r)$, then $r \geq 2$. Moreover, this bound is sharp.
\end{cor}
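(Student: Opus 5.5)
We will apply Theorem~\ref{thm:Main_Theorem_var1} with $n=2$, $k=7$ and $\gamma=2$, so the bound on $r$ is $\delta_{2,7}(2)$. The hypotheses check out as follows. We need $k \geq n+1 = 3$ and $\gamma \leq k-n = 5$. We need $\Sym^{m}(\pi)$ automorphic for $k-n-\gamma+1 = 4 \leq m \leq 7$; this holds by (i) together with the assumption on $\Sym^{7}(\pi)$. We need $\Sym^{m}(\pi)$ cuspidal for $4 \leq m \leq k-\gamma = 5$. Cuspidality of $\Sym^{4}(\pi)$ is (ii). Cuspidality of $\Sym^{5}(\pi)$ follows from Ramakrishnan's theorem quoted in the introduction: $\Sym^{5}(\pi)$ is automorphic and $\Sym^{2},\Sym^{3},\Sym^{4}$ are cuspidal. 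For $n=2$ there are no exterior powers $\Lambda^{j}$ with $2\le j\le n-1$, so the Rankin--Selberg hypotheses are vacuous. In fact in Lemma~\ref{lem:Main_Lemma} the only even-index term is $\Lambda^{2}(\pi)=\omega_\pi$, a character, and twisting by it preserves automorphy.

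Next we compute $\delta_{2,7}(2)$. Here $\floor{(n-1)/2}=0$, so only $i=0$ occurs. This gives $\binom{2+7-2-1}{1}/\binom{2}{1} = 6/2 = 3$, and then $\delta_{2,7}(2)=\ceil{3/2}=2$. Hence every cuspidal summand $\eta$ of $\Sym^{7}(\pi)$ has degree $r\geq 2$. Concretely, the induction in the proof of Theorem~\ref{thm:Main_Theorem_var1} runs as follows. Summands of $\Sym^{6}(\pi)$ have degree at least $\ceil{6/2}=3$, because $\Sym^{5}(\pi)\prec \widetilde{\pi}\boxtimes\tau$. Summands of $\Sym^{7}(\pi)$ then have degree at least $\ceil{3/2}=2$.

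For sharpness we must exhibit (at least conditionally, or in a model case) a quasi-icosahedral $\pi$ with $\Sym^{7}(\pi)$ having a $\GL(2)$ summand. The natural source is the genuinely icosahedral case. Take $\pi$ attached to an odd icosahedral Galois representation $\rho: \Gal(\bar F/F)\to \GL_2(\CC)$ whose projective image is $A_5$. We will use the known decompositions of symmetric powers of the binary icosahedral group's $2$-dimensional representation. $\Sym^{2},\Sym^{3},\Sym^{4}$ are irreducible (of dimensions $3,4,5$). $\Sym^{6}$ is reducible: for the underlying $A_5$-representation it splits as $3\oplus4$ (up to a character twist), so $\pi$ is quasi-icosahedral. $\Sym^{7}$, of dimension $8$, decomposes as $\rho'\otimes\chi \oplus (\text{a }6\text{-dimensional piece})$, where $\rho'$ is the Galois conjugate $2$-dimensional representation. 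This decomposition is checked via characters of $\mathrm{SL}_2(\mathbb{F}_5)$, which has irreducible representations of dimensions $2,2,4,6$ among its faithful ones.

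The main obstacle is automorphy in the sharpness example. One needs the symmetric powers up to $7$ of this $\pi$ to be automorphic, and $\rho'$ to correspond to a cuspidal representation $\eta$ of $\GL(2)$. For odd icosahedral representations over $\mathbb{Q}$, Artin's conjecture (now known, via Khare--Wintenberger/Serre's conjecture) supplies modularity of $\rho$ and $\rho'$. The decomposition of $\Sym^{7}$ into irreducible Artin representations of smaller dimension then gives the automorphic isobaric decomposition, since each piece is either a twist of a $\GL(2)$ Artin representation or obtainable from known lifts (Rankin--Selberg and symmetric powers of $\rho,\rho'$). I would first carry out the character computation for $\mathrm{SL}_2(\mathbb{F}_5)$ to pin down the $\GL(2)$ constituent of $\Sym^{7}$. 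I would then cite the modularity of odd icosahedral Galois representations to conclude that the bound $r\geq2$ is attained.
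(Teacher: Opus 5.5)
Your proposal is correct and follows the paper's proof. You apply Theorem~\ref{thm:Main_Theorem_var1} with $(n,k,\gamma)=(2,7,2)$, using (i), (ii) and Ramakrishnan's cuspidality of $\Sym^{5}(\pi)$, to get $r\geq\delta_{2,7}(2)=2$. You then show sharpness with the same odd icosahedral example over $\QQ$, where $\Sym^{7}$ contains a twist of the Galois-conjugate form $\pi'$. The one difference is that you would derive $\Sym^{7}\simeq 2'\otimes\chi\oplus 6$ from the character theory of $\mathrm{SL}_2(\mathbb{F}_5)$ and known lifts, rather than citing Ramakrishnan's Corollary~B for $\Sym^{7}(\pi)\simeq\Sym^{2}(\pi)\boxtimes\pi'\otimes\omega^{2}\boxplus\pi'\otimes\omega^{3}$ as the paper does; this also makes explicit that the example is quasi-icosahedral, which the paper leaves implicit.
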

\begin{proof}
Observe that $\Sym^{5}(\pi)$ is automorphic by the definition of $\pi$ being quasi-icosahedral. Moreover, $\Sym^{5}(\pi)$ is cuspidal~\cite[Theorem A]{Ramakrishnan_sym_power_cusp_form_2009}.
We now apply the lower bound for $r$, which is $\delta_{2,7}(2)$ in the proof of Theorem~\ref{thm:Main_Theorem_var1} and obtain
\begin{align}\label{eqn:bound_example}
    r \geq \delta_{2,7}(2) = \ceil{\frac{1}{2} \ceil{\binom{6}{1} \bigg/ \binom{2}{1} }} = 2 .
\end{align}

We now give an example to show that this bound is sharp.
We fix an Artin representation $\rho: \Gal(\overline{\QQ}/\QQ) \to \GL_{2}(\CC)$ which is of odd icosahedral type. Recall that $\rho$ is odd icosahedral if $\det \rho(c) = -1$, where $c$ is a complex conjugation and the projective image of $\rho$ is isomorphic to the icosahedral group $A_{5}$.
Wang~\cite[Proposition 2.1]{Wang_icosahedral_sym_2003} showed that $\rho$ factors through $(\widetilde{A_{5}} \times \mu_{2m})/ \{ \pm I\}$, where $\widetilde{A_{5}}$ is the binary icosahedral group and $\mu_{2m}$ is the group of roots of unity of order $2m$.
Hence, $\rho$ decomposes into a pair of representations $(\Lambda_{0}, \chi)$, where $\Lambda_{0}$ is a representation for $\widetilde{A_{5}}$ and $\chi$ is a representation for $\mu_{2m}$. 

Here is the character table for $\widetilde{A_{5}}$, which includes only the two complex irreducible representations of degree $2$.
\begin{align*}
\begin{array}{c|c|c|c|c|c|c|c|c|c}
     & [1] & [-1] & [\alpha] & [\alpha^{2}] & [\alpha^{3}] & [\alpha^{4}] & [\beta] & [\beta^{2}] & [\gamma] \\
    \hline
    \rho_{ico} & 2 & -2 & \frac{1+\sqrt{5}}{2} & -\frac{1-\sqrt{5}}{2} & \frac{1-\sqrt{5}}{2} & -\frac{1+\sqrt{5}}{2} & 1 & -1 & 0 \\
    \rho_{ico}^{\tau} & 2 & -2 & \frac{1-\sqrt{5}}{2} & -\frac{1+\sqrt{5}}{2} & \frac{1+\sqrt{5}}{2} & -\frac{1-\sqrt{5}}{2} & 1 & -1 & 0 
\end{array}
\end{align*}
where $\tau$ is the non-trivial element in $\Gal(\QQ(\sqrt{5})/\QQ)$ and $\rho_{ico}^{\tau} = \rho_{ico} \circ \tau$.
Since $\Lambda_{0}$ is irreducible and of degree $2$, it must be isomorphic to $\rho_{ico}$ or $\rho_{ico}^{\tau}$.

Without loss of generality, we assume that $\Lambda_{0}$ is isomorphic to $\rho_{ico}$. Consider also the odd icosahedral representation $\rho^{\tau}$, which decomposes into $(\rho_{ico}^{\tau}, \chi)$. By~\cite[Corollary 1.2]{Wang_icosahedral_sym_2003}, we have $\Sym^{3}(\rho) = \Sym^{3}(\rho^{\tau})$. 
The strong Artin conjecture, which is known for the odd icosahedral case due to the work of Khare-Wintenberger~\cite{Khare_Wintenberger_modularity_conjecture_I_2009,Khare_Wintenberger_modularity_conjecture_II_2009} and Kisin~\cite{Kisin_modularity_2009}, implies that there exists $\pi, \pi^{\prime} \in \mathcal{A}_{0}(\GL_{2}(\mathbb{A}_{F}))$ such that 
\begin{align*}
    L(s, \pi) = L(s, \rho) \qquad \text{ and } \qquad  L(s, \pi^{\prime}) = L(s, \rho^{\tau}) .
\end{align*}

Observe that $\Sym^{3}(\pi) = \Sym^{3}(\pi^{\prime})$. Ramakrishnan~\cite[Corollary B]{Ramakrishnan_recovering_cusp_form_2015} showed that 
\begin{align*}
    \Sym^{7}(\pi) \simeq \Sym^{2}(\pi) \boxtimes \pi^{\prime} \otimes \omega^{2} \boxplus \pi^{\prime} \otimes \omega^{3}
\end{align*}
where $\omega$ is the central character associated to $\pi$. Since $\pi^{\prime}$ is of degree $2$, we have constructed an example demonstrating that the bound~\eqref{eqn:bound_example} is sharp.
\end{proof}

\subsection{Sharpness of Theorem~\ref{thm:bounding_iso_sum_for_sym2}}
The bound in case (b) of Theorem~\ref{thm:bounding_iso_sum_for_sym2} is sharp, as shown by the three-dimensional irreducible representation $\rho$ of the group $A_{4}$~\cite[Section 4.1.5]{Walji_conjectural_decomposition_2022}. Its symmetric square decomposes into three $1$-dimensional representations and one $3$-dimensional representation, and hence 
\begin{align*}
    \mathcal{N}(\Sym^{2}(\rho)) = 4 = \floor{\frac{3^{2}+2\cdot 3+3}{4}} .
\end{align*}
By the strong Artin conjecture for solvable extensions, there exists a cuspidal automorphic representation $\pi$ for $\GL(3)$ corresponding to $\rho$, and hence $\mathcal{N}(\Sym^{2}(\pi)) = 4$.

We note that case (a) in Theorem~\ref{thm:bounding_iso_sum_for_sym2} can also occur. Consider the 2-dimensional irreducible representation $\sigma$ of the dihedral group $D_{8}$ of order $8$. It is known that $\sigma \otimes \sigma \simeq \Sym^{2}(\sigma) \oplus \Lambda^{2}(\sigma)$, as a representation of $D_{8}$, can be decomposed into four 1-dimensional representations. 
By the strong Artin conjecture, there exists a cuspidal automorphic representation $\pi$ for $\GL(2)$ such that both $\Sym^{2}(\pi)$ and $\Lambda^{2}(\pi)$ are isobaric sums of Hecke characters.
We can extend this example to a setting in $\GL(2^{n})$.
Let $D_{8}^{n}$ be the direct product of $n$ copies of $D_{8}$, and set $\sigma^{\prime} = \bigotimes_{i=1}^{n} \sigma$. Then $\sigma^{\prime}$ is an irreducible representation of $D_{8}^{n}$ of dimension $2^{n}$. 
Using the distributivity of the tensor product over direct sums
\begin{align*}
    (\sigma_{1} \oplus \sigma_{2}) \otimes (\sigma_{3} \oplus \sigma_{4}) \simeq (\sigma_{1} \otimes \sigma_{3}) \oplus (\sigma_{1} \otimes \sigma_{4}) \oplus (\sigma_{2} \otimes \sigma_{3}) \oplus (\sigma_{2} \otimes \sigma_{4}) ,
\end{align*}
together with the decomposition of $\sigma \otimes \sigma$ into 1-dimensional representations, it follows inductively that $\Sym^{2}(\sigma^{\prime})$ and $\Lambda^{2}(\sigma^{\prime})$ decompose as direct sums of 1-dimensional representations.

More generally, let $n = p_{1}^{r_{1}} \cdots p_{k}^{r_{k}}$ be its prime factorization, and let $m = p_{1} \cdots p_{k}$ denote the product of the distinct primes dividing $n$.
For any $k$ divisible by $m$, there exists a cuspidal automorphic representation $\pi$ for $\GL(n)$ such that $\pi^{\otimes k}$ is an isobaric sum of Hecke characters. The same holds for $\Sym^{k} (\pi)$.

It suffices to show that $\pi^{\otimes m}$ is an isobaric sum of Hecke characters. Fix a prime $p$ dividing $n$. Consider the Heisenberg group $\He_{p}$, which is the semidirect product $(\ZZ/p\ZZ)^{2} \rtimes \ZZ/p\ZZ$. It has center $Z \cong \ZZ/p\ZZ$. Note that $\He_{2}$ is the dihedral group $D_{8}$.
The irreducible complex representations of $\He_{p}$ are given as follows (see e.g.~\cite{Misaghian_Heisenberg_group_repn_2010}):
\begin{itemize}
    \item There are exactly $p^{2}$ irreducible representations of dimension $1$. Their character values are 1 on the center $Z$.
    \item There are exactly $p-1$ irreducible representations of dimension $p$. Each corresponds to a non-trivial character $\chi'$ of the center $Z$. Their character values are $p \cdot \chi'$ on $Z$ and 0 for elements not in $Z$.
\end{itemize}
Let $\rho_{p}$ be a $p$-dimensional irreducible representation of $\He_{p}$. Then $\rho_{p}^{\otimes p}$ has dimension $p^{p}$. Its character values are $p^{p}$ on the center $Z$ and $0$ for elements not in $Z$. 
Let $\chi$ be a one-dimensional irreducible representation of $\He_{p}$. We compute the multiplicity of $\chi$ in $\rho_{p}^{\otimes p}$ by 
\begin{align*}
    \langle \rho_{p}^{\otimes p}, \chi \rangle = \frac{1}{\abs{G}} \sum_{g \in G} \rho_{p}^{\otimes p}(g) \overline{\chi(g)} = \frac{1}{p^{3}} \sum_{g \in Z} p^{p} = p^{p-2}.
\end{align*}
This shows that $\rho_{p}^{\otimes p}$ can be decomposed into a sum of $p^{2}$ distinct irreducible representations of dimension 1, each occurring with multiplicity $p^{p-2}$.

To construct an irreducible representation of dimension $n$ that decomposes as a sum of 1-dimensional representations, we follow the same idea as in the construction of the representation of $Q_{8}^{n}$ as above.
Consider $G = \prod_{i=1}^{k} \He_{p_{i}}^{r_{i}}$, the direct product of $r_{i}$ copies of $\He_{p_{i}}$. Then $\bigotimes_{i=1}^{k} \rho_{p_{i}}^{\otimes r_{i}}$ is an irreducible representation of $G$ of dimension $n$.
Hence, there exists a cuspidal automorphic representation $\pi$ for $\GL(n)$ such that $\pi^{\otimes m}$ is an isobaric sum of Hecke characters.

\bibliographystyle{alpha}
\bibliography{bibliography}

\end{document}